\DeclareMathAlphabet{\mathpzc}{OT1}{pzc}{m}{it}
\begin{document}

\newtheorem{theorem}[subsection]{Theorem}
\newtheorem{proposition}[subsection]{Proposition}
\newtheorem{lemma}[subsection]{Lemma}
\newtheorem{corollary}[subsection]{Corollary}
\newtheorem{conjecture}[subsection]{Conjecture}
\newtheorem{prop}[subsection]{Proposition}
\numberwithin{equation}{section}
\newcommand{\mr}{\ensuremath{\mathbb R}}
\newcommand{\mc}{\ensuremath{\mathbb C}}
\newcommand{\dif}{\mathrm{d}}
\newcommand{\intz}{\mathbb{Z}}
\newcommand{\ratq}{\mathbb{Q}}
\newcommand{\natn}{\mathbb{N}}
\newcommand{\comc}{\mathbb{C}}
\newcommand{\rear}{\mathbb{R}}
\newcommand{\prip}{\mathbb{P}}
\newcommand{\uph}{\mathbb{H}}
\newcommand{\fief}{\mathbb{F}}
\newcommand{\majorarc}{\mathfrak{M}}
\newcommand{\minorarc}{\mathfrak{m}}
\newcommand{\sings}{\mathfrak{S}}
\newcommand{\fA}{\ensuremath{\mathfrak A}}
\newcommand{\mn}{\ensuremath{\mathbb N}}
\newcommand{\mq}{\ensuremath{\mathbb Q}}
\newcommand{\half}{\tfrac{1}{2}}
\newcommand{\f}{f\times \chi}
\newcommand{\summ}{\mathop{{\sum}^{\star}}}
\newcommand{\chiq}{\chi \bmod q}
\newcommand{\chidb}{\chi \bmod db}
\newcommand{\chid}{\chi \bmod d}
\newcommand{\sym}{\text{sym}^2}
\newcommand{\hhalf}{\tfrac{1}{2}}
\newcommand{\sumstar}{\sideset{}{^*}\sum}
\newcommand{\sumprime}{\sideset{}{'}\sum}
\newcommand{\sumprimeprime}{\sideset{}{''}\sum}
\newcommand{\sumflat}{\sideset{}{^{\flat}}\sum}
\newcommand{\sumSTAR}{\sideset{}{^{\star}}\sum}
\newcommand{\shortmod}{\ensuremath{\negthickspace \negthickspace \negthickspace \pmod}}
\newcommand{\V}{V\left(\frac{nm}{q^2}\right)}
\newcommand{\sumi}{\mathop{{\sum}^{\dagger}}}
\newcommand{\mz}{\ensuremath{\mathbb Z}}
\newcommand{\leg}[2]{\left(\frac{#1}{#2}\right)}
\newcommand{\muK}{\mu_{\omega}}

\newcommand{\RR}{\mathbb{R}}
\newcommand{\QQ}{\mathbb{Q}}
\newcommand{\CC}{\mathbb{C}}
\newcommand{\NN}{\mathbb{N}}
\newcommand{\ZZ}{\mathbb{Z}}
\newcommand{\FF}{\mathbb{F}}
\newcommand{\C}{{\mathcal{C}}}
\newcommand{\OO}{{\mathcal{O}}}
\newcommand{\cc}{{\mathfrak{c}}}
\newcommand{\norm}{{\mathpzc{N}}}
\newcommand{\trace}{{\mathrm{Tr}}}
\newcommand{\ringO}{{\mathfrak{O}}}
\newcommand{\fa}{{\mathfrak{a}}}
\newcommand{\fb}{{\mathfrak{b}}}
\newcommand{\fc}{{\mathfrak{c}}}
\newcommand{\res}{{\mathrm{res}}}
\newcommand{\fp}{{\mathfrak{p}}}
\newcommand{\fm}{{\mathfrak{m}}}
\newcommand{\aut}{\rm Aut}
\newcommand{\mt}{m(t,u;\ell^k)}
\newcommand{\mtone}{m(t_1,u;\ell^k)}
\newcommand{\mttwo}{m(t_2,u;\ell^k)}
\newcommand{\mbadu}{m(t,u_0;\ell^k)}
\newcommand{\Sts}{S(t_1,t_2;\ell^k)}
\newcommand{\Stt}{S(t;\ell^k)}
\newcommand{\St}{R(t;\ell^k)}
\newcommand{\nN}{n(N,u;\ell^k)}
\newcommand{\Tnn}{T(N;\ell^k)}
\newcommand{\Tn}{T(N;\ell^k)}
\newcommand{\tilnul}{{\tilde{\nu}}_\ell}
\newcommand{\tilnulk}{{\tilde{\nu}}_\ell^{(k)}}
\makeatletter
\def\imod#1{\allowbreak\mkern7mu({\operator@font mod}\,\,#1)}
\makeatother

\title[Value-distribution of quartic Hecke $L$-functions]{Value-distribution of quartic Hecke $L$-functions}

\date{\today}
\author{Peng Gao and Liangyi Zhao}

\begin{abstract}
Set $K=\QQ(i)$ and suppose that $c\in \ZZ[i]$ is a square-free algebraic integer with $c\equiv 1 \imod{\langle16\rangle}$.  Let $L(s,\chi_{c})$ denote the Hecke $L$-function associated with the quartic residue character modulo $c$.  For $\sigma>1/2$, we prove an asymptotic distribution function $F_{\sigma}$ for the values of the logarithm of
\begin{equation*}
L_c(s)= L(s,\chi_c)L(s,\overline{\chi}_{c}),
\end{equation*}
  as $c$ varies.  Moreover, the characteristic function of $F_{\sigma}$ is expressed explicitly as a product over the prime ideals of $\ZZ[i]$.
\end{abstract}

\maketitle

\noindent {\bf Mathematics Subject Classification (2010)}: 11M41, 11R42  \newline

\noindent {\bf Keywords}: value-distribution, logarithm of $L$-functions, quartic characters

\section{Introduction}
\label{sec1}

Let $d$ be a non-square integer such that $d\equiv 0,1\imod 4$ and $\chi_d=\left(\frac{d}{.}\right)$ be the Kronecker symbol. In the early 1950s, S. Chowla and P. Erd\H{o}s studied the distribution of values of quadratic Dirichlet $L$-functions $L(s, \chi_d)$.
They proved in \cite{chowla-erdos} that when $\sigma>3/4$, then
$$\lim_{x\rightarrow \infty} \frac{\#\{0<d\leq x;~d\equiv 0, 1\imod{4}~{\rm and}~ L(\sigma, \chi_d)\leq z\}}{x/2}=G(z)$$
exists and the distribution function $G(z)$ is continuous and strictly increasing satisfying $G(0)=0$, $G(\infty)=1$. This result was further strengthened by P. D. T. A. Elliott for $\sigma=1$ in \cite{elliott-0}. \newline

   A systematic study of the value-distribution of the logarithm and the logarithmic derivative of $L$-functions on the half-plane $\Re(s)>1/2$ has been carried out by Y. Ihara and K. Matsumoto (see for example \cite{I-M1} and \cite{I-M}). Based on the approach in \cite{I-M}, M. Mourtada and V. K. Murty proved (\cite[Theorem 2]{M-M}), assuming the Generalized Riemann Hypothesis (GRH) for $L(s, \chi_d)$, that for any $\sigma>1/2$, there exists a probability density function $Q_\sigma$ such that
\begin{equation*}
\lim _{Y\rightarrow\infty} \frac{1}{\#\mathcal{F}(Y)} \# \left\{ d\in \mathcal{F}(Y), \frac {L^{\prime}(\sigma, \chi_d)}{L(\sigma, \chi_d)} \leq z \right\}= \int\limits_{-\infty}^{z} Q_\sigma(t) \dif t.
\end{equation*}
Here $\mathcal{F}(Y)$ denotes the set of the fundamental discriminants in the interval $[-Y, Y]$ \newline

If $d$ is a fundamental discriminant, $L(s, \chi_d)=\zeta_{\QQ(\sqrt{d})}(s)/\zeta(s)$, with $\zeta_{\QQ(\sqrt{d})}(s)$ denoting the Dedekind
zeta function of $\QQ(\sqrt{d})$ and $\zeta(s)$ the Riemann zeta function.  A. Akbary and A. Hamieh studied an analogue case of the above result of  Mourtada and Murty. Let $F=\QQ(\zeta_3)$ and $\ringO_F=\ZZ[\zeta_3]$ be the ring of integers of $F$, where $\zeta_3=\exp(2\pi i/3)$. Let $\mathcal{D}(Y)$ denote the set of square-free elements $d$ such that $d \equiv 1 \pmod 9$ and $\norm(d) \leq Y$.  Further define $L_{d, F}(s)=\zeta_{F(d^{1/3})}(s)/\zeta(s)$, where $\zeta_{F(d^{1/3})}(s)$ is the Dedekind zeta function of $F(d^{1/3})$.  Then Akbary and Hamieh \cite[Theorem 1.4]{AH} proved that, without assuming GRH, for either $\mathcal{L}_{d}(s)=\log L_d(s)$ or $L'_d/L_d(s)$, there exists a corresponding probability density function $D_\sigma$ such that for every $\sigma>1/2$,
\begin{equation*}
\lim _{Y\rightarrow\infty} \frac{1}{\#\mathcal{D}(Y)} \# \left\{ d \in \mathcal{D}(Y),  \mathcal{L}_{d}(\sigma) \leq z \right\}= \int\limits_{-\infty}^{z} D_{\sigma}(t) \dif t.
\end{equation*}

  We note that $L_{d, F}(s)$ can be decomposed as a product of Hecke $L$-functions. In fact, it is shown in the paragraph below \cite[(2)]{AH} that
\begin{equation}
\label{Ld}
L_{d, F}(s)= L(s,\chi_d)L(s,\overline{\chi}_{d}),
\end{equation}
   where $L(s,\chi_{d})$ is the Hecke $L$-function associated with the cubic residue symbol $\chi_{d}=\left(\frac{\cdot}{d}\right)_3$. \newline

Motivated by the above result, we consider the value-distribution of the logarithm of the product of quartic Hecke $L$-functions in this paper.  Set $K=\QQ(i)$ and $\mathcal{O}_K =\ZZ[i]$, the ring of integers of $K$.  Let
\begin{align*}
\mathcal{C}:=\left\{c\in \mathcal{O}_K : ~c\neq 1 \text{ is square-free and } c\equiv 1 \imod{\langle 16 \rangle} \right\}.
\end{align*}
In the same spirit as \eqref{Ld}, we define
\begin{equation*}
L_c(s)= L(s,\chi_c)L(s,\overline{\chi}_{c}),  \quad \mathcal{L}_{c}(s)=\log L_{c}(s),
\end{equation*}
  where $L(s,\chi_{c})$ is the Hecke $L$-function associated with the quartic residue symbol $\chi_{c}=\left(\frac{.}{c}\right)_4$.
Our result is
\begin{theorem}\label{mainthrm}
Let $\sigma>1/2$ and
\[\mathcal{S}(Y) = \#\left\{{c}\in \mathcal{C}: \norm(c)\leq Y \right\}. \]
Then there is a smooth density function $M_{\sigma}$ such that
\[\lim_{Y\to\infty}\frac{1}{\mathcal{S}(Y)}\#\left\{{c}\in \mathcal{C}: \norm(c)\leq Y\;\; \text{and}\;\;\ \mathcal{L}_{c}(\sigma) \leq z \right\}=\int\limits_{-\infty}^{z}M_{\sigma}(t)\; \dif t.\]
Furthermore, $M_\sigma$ can be constructed as the inverse Fourier transform of the characteristic function
 \begin{equation} \label{phiydef}
\varphi_{\sigma}(y)=\exp\left(-2iy\log(1-2^{-\sigma})\right)\prod_{\fp\nmid\langle2\rangle}
\left( \frac{1}{\norm(\fp)+1}+\frac{1}{4}\frac{\norm(\fp)}{\norm(\fp)+1}\sum_{j=0}^{3}\exp\left(-2iy\log\left|1-\frac{i^{j}}{\norm(\fp)^{\sigma}}\right|\right) \right).
\end{equation}
 \end{theorem}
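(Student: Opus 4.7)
The plan is to follow the Ihara--Matsumoto framework as adapted by Akbary and Hamieh in \cite{AH}: construct $M_\sigma$ as the density of a ``random Euler product'' modelling $\mathcal{L}_c(\sigma)$, and prove convergence of characteristic functions over $c\in\mathcal{C}$ via equidistribution of quartic residue symbols.

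\emph{Step 1 (Dirichlet expansion and truncation).} First I would combine the Euler products of $L(s,\chi_c)$ and $L(s,\overline{\chi}_c)$ to obtain
\[
\mathcal{L}_c(\sigma) = -2\sum_{\fp} \log\!\left|1-\frac{\chi_c(\fp)}{\norm(\fp)^{\sigma}}\right|.
\]
Because $c\equiv 1 \imod{\langle 16\rangle}$, the supplementary quartic reciprocity law forces $\chi_c(\langle 1+i\rangle)=1$, producing the deterministic leading factor $\exp(-2iy\log(1-2^{-\sigma}))$ in $\varphi_\sigma(y)$. Next I would truncate the remaining sum at primes with $\norm(\fp)\le X$ and write $\mathcal{L}_c(\sigma)=\mathcal{L}_c^X(\sigma)+\mathcal{R}_c^X(\sigma)$.

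\emph{Step 2 (Characteristic function via equidistribution).} The central computation is to prove, for every fixed $X$,
\[
\frac{1}{\mathcal{S}(Y)}\sum_{\substack{c\in\mathcal{C}\\ \norm(c)\le Y}} \exp\bigl(iy\,\mathcal{L}_c^X(\sigma)\bigr) \; \longrightarrow \; \varphi_\sigma^X(y) \qquad (Y\to\infty),
\]
where $\varphi_\sigma^X$ is the partial product in \eqref{phiydef} restricted to $\norm(\fp)\le X$. After factoring $\exp(iy\mathcal{L}_c^X(\sigma))$ as a finite product indexed by $\fp$ and grouping $c\in\mathcal{C}$ by the joint residue of $c$ modulo $\prod_{\norm(\fp)\le X}\fp$, one reduces to counting squarefree $c\in\mathcal{C}$ in prescribed residue classes in $\ZZ[i]$. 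Quartic reciprocity converts $\chi_c(\fp)$ into a character of $c$, and the standard counting then yields the local weights $1/(\norm(\fp)+1)$ for $\chi_c(\fp)=0$ and $\tfrac{1}{4}\cdot\norm(\fp)/(\norm(\fp)+1)$ for each value $i^j$, with asymptotic independence across distinct primes.

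\emph{Step 3 (Tail, limit, and Fourier inversion).} I would control the tail $\mathcal{R}_c^X(\sigma)$ in mean square over $c\in\mathcal{C}$ using the quartic large sieve inequality in $\ZZ[i]$, so that its contribution to the characteristic function vanishes uniformly in $y$ on compact sets as $X\to\infty$. Combining this with Step 2 yields convergence of the full characteristic functions to $\varphi_\sigma(y)$, whence L\'evy's continuity theorem gives the convergence in distribution. The smooth density $M_\sigma$ is then defined by Fourier inversion; smoothness follows from establishing rapid decay of $\varphi_\sigma(y)$, which in turn rests on a uniform bound strictly less than $1$ for each local factor away from $y=0$, using the distinctness (as $j$ varies) of the real numbers $\log|1-i^j/\norm(\fp)^\sigma|$.

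\emph{Main obstacle.} The crux will be the uniform tail estimate on $\mathcal{R}_c^X(\sigma)$: since $\sigma$ is only assumed to exceed $1/2$, the defining Dirichlet series for $\log L_c(\sigma)$ does not converge pointwise, and GRH is not available. The remedy, paralleling \cite{AH}, is to replace pointwise control by a second-moment bound obtained from a quartic large sieve inequality in $\ZZ[i]$, exploiting cancellation among quartic Hecke characters as $c$ varies over $\mathcal{C}$. Balancing $X$ against $Y$ with quantitative error terms, and securing enough decay of $\varphi_\sigma(y)$ to ensure a \emph{smooth} rather than merely continuous density, will be the most delicate bookkeeping.
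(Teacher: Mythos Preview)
Your proposal diverges from the paper at the crucial point, and the divergence is a genuine gap rather than an alternative route.

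The paper does \emph{not} truncate $\mathcal{L}_c(\sigma)$ at a prime cutoff and bound the tail in mean square. Instead it works directly with $\exp(iy\mathcal{L}_c(\sigma))$: for $\Re(s)>1$ this has the Dirichlet expansion $\sum_{\fa,\fb}\lambda_y(\fa)\lambda_y(\fb)\chi_c(\fa\fb^3)\norm(\fa\fb)^{-s}$ (Lemma~\ref{lem:exp(iyLc)}), and the passage to $1/2<\sigma\le 1$ is achieved by a contour shift that is justified only for $c$ whose $L$-function is zero-free in a suitable rectangle (Lemma~\ref{rep}). The exceptional $c$ are discarded using the zero-density estimate of Lemma~\ref{lem:zer-density}, which shows they contribute $O(Y^\delta)$. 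After this, the large sieve and the P\'olya--Vinogradov bound are applied not to a ``tail of $\log L$'' but to the off-diagonal (non-fourth-power) part of the smoothed Dirichlet series for $\exp(iy\mathcal{L}_c)$.

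Your Step~1 writes $\mathcal{L}_c(\sigma)=-2\sum_\fp\log|1-\chi_c(\fp)\norm(\fp)^{-\sigma}|$, but for $\sigma\le 1$ this identity is false in general: $\mathcal{L}_c(\sigma)$ is defined by analytic continuation, and the Euler sum need not converge, nor equal $\mathcal{L}_c(\sigma)$ even when it does. Consequently your tail $\mathcal{R}_c^X(\sigma)=\mathcal{L}_c(\sigma)-\mathcal{L}_c^X(\sigma)$ is \emph{not} a Dirichlet polynomial over primes of norm $>X$, so the quartic large sieve gives you no handle on it. What the large sieve \emph{can} bound in mean square is $\sum_{\norm(\fp)>X}\chi_c(\fp)\norm(\fp)^{-\sigma}$, but relating this to $\mathcal{R}_c^X(\sigma)$ for most $c$ is precisely a zero-free region statement, which is exactly what the paper supplies via Lemma~\ref{lem:zer-density}. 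Without inserting a zero-density input (or GRH), your Step~3 does not close, and this is not merely bookkeeping: it is the main analytic idea that \cite{AH} and the present paper use to reach all $\sigma>1/2$ unconditionally.
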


Our proof of Theorem~\ref{mainthrm} closely follows the treatment of Theorem 1.3 in \cite{AH}.  We shall rely on the following two propositions for the proof of Theorem~\ref{mainthrm}.

\begin{prop}\label{mainprop1}
Set
\[\mathcal{S}^{*}(Y)=\sum_{c\in\mathcal{C}}\exp \left( -\norm(c)/Y \right).\]
Fix $\sigma>1/2$ and $y\in\mathbb{R}$.  We have
\begin{equation*}
\lim_{Y\to\infty}\frac{1}{\mathcal{S}^{*}(Y)} \sumSTAR_{{{c}}\in \mathcal{C}}\exp\left(iy\mathcal{L}_{{c}} (\sigma)\right)\exp \left( -\norm({c})/Y \right)=\varphi_{\sigma}(y),
\end{equation*}
where $\varphi_{\sigma}(y)$ is defined in \eqref{phiydef} and henceforth $\sum^{\star}$ indicates that the sum is over $c$ for which $L_c(\sigma)\neq 0$.
\end{prop}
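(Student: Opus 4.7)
The plan is to follow the strategy of \cite[Sections 3--4]{AH} from the cubic setting, adapted to the quartic characters in this paper. The three main steps are: (1) replace $\mathcal{L}_c(\sigma)$ by a truncation $\mathcal{L}_c^{(x)}(\sigma)$ over prime ideals of norm at most $x$; (2) compute the characteristic function of $\mathcal{L}_c^{(x)}(\sigma)$ averaged over $\mathcal{C}$ by orthogonality of quartic residue symbols; and (3) control the tail in mean square so as to pass from the truncated to the full characteristic function.

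The starting observation is that, since $\sigma$ is real, the Euler product gives (formally, for $\sigma>1$)
\begin{equation*}
\mathcal{L}_c(\sigma) \;=\; -2\sum_{\fp}\log\left|1-\frac{\chi_c(\fp)}{\norm(\fp)^{\sigma}}\right|,
\end{equation*}
with the ramified prime $\fp=\langle1+i\rangle$ contributing $-2\log(1-2^{-\sigma})$, because the supplementary laws of quartic reciprocity applied to $c\equiv1\imod{\langle16\rangle}$ force $\chi_c(1+i)=1$. Truncating the sum to $\norm(\fp)\leq x$ defines $\mathcal{L}_c^{(x)}(\sigma)$, so that $\exp\bigl(iy\mathcal{L}_c^{(x)}(\sigma)\bigr)$ factors as a finite product over such $\fp$. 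For step (2), I would evaluate, for any finite set of distinct unramified primes $\fp_1,\ldots,\fp_r$, the joint limit of the distribution of $\bigl(\chi_c(\fp_1),\ldots,\chi_c(\fp_r)\bigr)$ as $c$ ranges over $\mathcal{C}$ weighted by $\exp(-\norm(c)/Y)$. By quartic reciprocity each $\chi_c(\fp_j)$ depends only on the residue class of $c$ modulo a fixed multiple of $\fp_j\langle16\rangle$; a routine count of square-free $c\equiv1\imod{\langle16\rangle}$ in arithmetic progressions, using the Chinese Remainder Theorem and M\"obius inversion over divisors of $\fp_1\cdots\fp_r$, then shows that the $\chi_c(\fp_j)$ are asymptotically independent, with $\chi_c(\fp_j)=0$ of density $1/(\norm(\fp_j)+1)$ and $\chi_c(\fp_j)=i^k$ of density $\tfrac14\norm(\fp_j)/(\norm(\fp_j)+1)$ for each $k\in\{0,1,2,3\}$. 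This reproduces the local factor in \eqref{phiydef}. The cancellation $\sum_{k=0}^{3}i^{k}=0$ makes each local factor equal to $1+O(\norm(\fp)^{-2\sigma})$, so the infinite product converges absolutely for $\sigma>1/2$, and the truncated characteristic function tends to $\varphi_\sigma(y)$ as $x\to\infty$.

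The principal obstacle is step (3): for $1/2<\sigma\leq 1$ the Euler product does not converge absolutely, so the error $\mathcal{L}_c(\sigma)-\mathcal{L}_c^{(x)}(\sigma)$ must be controlled on average. The elementary inequality $|e^{iy\alpha}-e^{iy\beta}|\leq|y||\alpha-\beta|$ combined with Cauchy--Schwarz reduces matters to proving
\begin{equation*}
\limsup_{Y\to\infty}\frac{1}{\mathcal{S}^{*}(Y)}\sumSTAR_{c\in\mathcal{C}}\bigl|\mathcal{L}_c(\sigma)-\mathcal{L}_c^{(x)}(\sigma)\bigr|^{2}\exp(-\norm(c)/Y)\;\longrightarrow\;0
\end{equation*}
as $x\to\infty$. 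Following \cite[Section 4]{AH}, I would express the tail as a Mellin-type contour integral for $\log L_c(s)$, shift the contour past $\Re(s)=\sigma$ down to $\Re(s)=\tfrac12+\varepsilon$, and bound what remains by combining a zero-density estimate for the family $\{L(s,\chi_c)\}_{c\in\mathcal{C}}$ (which simultaneously controls the $c$ omitted by $\sumSTAR$) with second-moment bounds for short Dirichlet polynomials over prime ideals of $\ZZ[i]$ twisted by quartic characters. Supplying this quartic analytic input, in place of the cubic tools of \cite{AH}, is the main work of the proof.
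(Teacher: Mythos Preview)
Your approach is valid but takes a genuinely different route from the paper's. The paper (following \cite{AH} and the Ihara--Matsumoto method) never truncates $\mathcal{L}_c(\sigma)$ at the prime level; instead it expands $\exp(iy\mathcal{L}_c(s))$ itself as the Dirichlet series $\sum_{\fa,\fb}\lambda_y(\fa)\lambda_y(\fb)\chi_c(\fa\fb^{3})\norm(\fa\fb)^{-s}$ (Lemma~\ref{lem:exp(iyLc)}), smooths that series at length $X$ by Mellin inversion, and uses the zero-density estimate only to control the contour remainder. Averaging the truncated series over $c$, the terms with $\fa\fb^{3}$ a perfect fourth power supply the main term via Lemma~\ref{lem:luo-lemma}, while non-fourth-powers are dispatched with the P\'olya--Vinogradov bound and the quartic large sieve; the resulting series $\widetilde{M}_\sigma(y)$ is then converted into the Euler product $\varphi_\sigma(y)$ by a separate combinatorial computation (Proposition~\ref{lem:euler}). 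Your probabilistic route has the advantage of producing $\varphi_\sigma(y)$ immediately from the asymptotic joint equidistribution of the symbols $\chi_c(\fp_j)$, bypassing that last computation entirely. The cost is that step~(3) works with the unbounded quantity $\mathcal{L}_c(\sigma)$: on the exceptional set of $c$ with zeros in the rectangle you must fall back on the trivial bound $|e^{iy\alpha}-e^{iy\beta}|\le 2$ rather than the Cauchy--Schwarz argument, and on its complement you need a pointwise bound for $\log L_c$ along the shifted contour (e.g.\ via Borel--Carath\'eodory and convexity) before the second-moment input applies. One correction: your attribution of this plan to \cite[Sections~3--4]{AH} is misplaced---that paper uses exactly the Ihara--Matsumoto Dirichlet-series scheme the present paper is adapting, not a prime-level truncation of $\mathcal{L}_c$.
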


\begin{prop}\label{mainprop2}
Let $\delta>0$ be given and $\sigma>1/2$ be fixed. For sufficiently large values of $y$, we have $$\left|\varphi_{\sigma}(y)\right|\leq\exp\left(-C|y|^{1/\sigma-\delta}\right),$$ where $C$ is a positive constant and can be chosen depending on values of $\delta$ and $\sigma$.
\end{prop}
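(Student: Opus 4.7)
The plan is to factor $|\varphi_\sigma(y)|$ as an Euler-like product over prime ideals, bound each local factor by $1$ (which holds by the triangle inequality, since the prefactor $\exp(-2iy\log(1-2^{-\sigma}))$ has unit modulus), and then extract quantitative decay by Taylor expanding the local factors at primes $\fp$ with $\norm(\fp)$ somewhat larger than $|y|^{1/\sigma}$. Concretely, write
\[
f_\fp(y) := \frac{1}{\norm(\fp)+1} + \frac{1}{4}\frac{\norm(\fp)}{\norm(\fp)+1}\sum_{j=0}^{3} \exp\left(-2iy\log\left|1 - \frac{i^j}{\norm(\fp)^\sigma}\right|\right),
\]
so $|\varphi_\sigma(y)| = \prod_{\fp\nmid\langle 2\rangle} |f_\fp(y)|$ with $|f_\fp(y)| \leq 1$. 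For a large absolute constant $A > 0$ I will restrict the product to primes with $N := \norm(\fp) \geq X_0 := (A|y|)^{1/\sigma}$.

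For such primes, set $\epsilon := 1/N^\sigma$, let $a_j := \log|1 - i^j\epsilon|$, and write $f_\fp(y) = (1 + N g_\fp(y))/(N+1)$ with $g_\fp(y) := \tfrac14\sum_{j=0}^3 e^{-2iy a_j}$. Using $a_0 = \log(1-\epsilon)$, $a_2 = \log(1+\epsilon)$ and $a_1 = a_3 = \tfrac12\log(1+\epsilon^2)$, the four phases group as
\[
4 g_\fp(y) = 2 e^{-iy(a_0 + a_2)}\cos\bigl(y(a_2 - a_0)\bigr) + 2 e^{-2iy a_1}.
\]
A direct Taylor expansion valid in the regime $|y|\epsilon \leq 1/A$ yields $g_\fp(y) = 1 - y^2\epsilon^2 + O(y^2\epsilon^4)$; in particular the linear-in-$y$ imaginary contributions of the two summands cancel at leading order. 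For $A$ sufficiently large this gives $1 - \Re g_\fp(y) \geq \tfrac12 y^2 \epsilon^2$ and $1 - |g_\fp(y)|^2 \geq y^2\epsilon^2$. Combined with the elementary identity
\[
1 - |f_\fp(y)|^2 = \frac{N^2\bigl(1 - |g_\fp(y)|^2\bigr) + 2N\bigl(1 - \Re g_\fp(y)\bigr)}{(N+1)^2},
\]
this yields $1 - |f_\fp(y)|^2 \geq y^2/(2N^{2\sigma})$, whence $\log|f_\fp(y)| \leq -\tfrac14\, y^2/N^{2\sigma}$ via $\log x \leq x-1$ applied to $x = |f_\fp(y)|^2$.

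Summing over the chosen primes and invoking the prime ideal theorem for $K = \QQ(i)$ (a Chebyshev-type lower bound suffices), the number of prime ideals in the dyadic range $[X_0, 2X_0]$ is $\gg X_0/\log X_0$, so
\[
\log|\varphi_\sigma(y)| \leq -\frac{y^2}{4}\sum_{\substack{\fp\nmid\langle 2\rangle\\ \norm(\fp)\geq X_0}}\frac{1}{\norm(\fp)^{2\sigma}} \ll -\frac{y^2\, X_0^{1-2\sigma}}{\log X_0} \ll -\frac{|y|^{1/\sigma}}{\log|y|}.
\]
This is stronger than the required bound: for any fixed $\delta > 0$, one has $|y|^{1/\sigma}/\log|y| \geq |y|^{1/\sigma - \delta}$ for all $|y|$ sufficiently large in terms of $\delta$, so the proposition follows.

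The main technical obstacle is the Taylor expansion of $g_\fp(y)$: one must verify that the four phases do not conspire to cancel the quadratic-in-$y$ real contribution. The grouping above makes this transparent: the pair $j = 0, 2$ contributes $\cos(2y/N^\sigma) < 1$ with a gap of order $y^2/N^{2\sigma}$ when $y/N^\sigma$ is small, while the pair $j = 1, 3$ precisely cancels the linear-in-$y$ imaginary drift arising from the asymmetry $a_0 + a_2 = -\epsilon^2 + O(\epsilon^4)$. Once this cancellation is recorded, the rest of the argument is a routine appeal to the prime ideal theorem, and no use of a Riemann hypothesis for $L_c(s)$ is required.
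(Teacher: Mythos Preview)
Your argument is correct, and it reaches the goal by a genuinely different route than the paper's proof. The paper restricts attention to primes $\fp$ with $\norm(\fp)^{\sigma}$ in a fixed multiplicative window around $y$ (specifically $\tfrac{2y}{3.54}\le \norm(\fp)^{\sigma}\le \tfrac{2y}{2.7}$), and then uses explicit numerical trigonometric inequalities to show that for each such prime the local factor satisfies $|\widetilde{M}_{\sigma,\fp}(y)|\le 0.8$; the prime ideal theorem then supplies $\gg y^{1/\sigma-\delta}$ such primes. You instead look at primes with $\norm(\fp)^{\sigma}\ge A|y|$, where the phases are small enough to Taylor-expand, obtaining a quantitative per-prime deficit $\log|f_\fp(y)|\le -\tfrac14\,y^{2}/\norm(\fp)^{2\sigma}$ and summing this by partial summation. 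Your approach avoids the somewhat ad hoc numerical constants of the paper and in fact yields the slightly sharper bound $|\varphi_\sigma(y)|\le\exp\bigl(-c\,|y|^{1/\sigma}/\log|y|\bigr)$; the paper's method would give the same once one records the prime count as $\gg y^{1/\sigma}/\log y$ rather than $\gg y^{1/\sigma-\delta}$.

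One small imprecision: the remainder in your expansion of $g_\fp(y)$ should read $O\bigl((y\epsilon)^{4}+y^{2}\epsilon^{4}\bigr)$ rather than $O(y^{2}\epsilon^{4})$, since $\cos(2y\epsilon)=1-2(y\epsilon)^{2}+O((y\epsilon)^{4})$. This is harmless, because in your regime $|y|\epsilon\le 1/A$ one has $(y\epsilon)^{4}\le (y\epsilon)^{2}/A^{2}$, so the conclusion $1-\Re g_\fp(y)\ge \tfrac12 y^{2}\epsilon^{2}$ for $A$ large is unaffected. The identity for $1-|f_\fp(y)|^{2}$ and the subsequent inequalities are all correct as written.
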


 The above two propositions clearly have their cubic analogues in \cite{AH}.  Theorem~\ref{mainthrm} follows from the above two propositions using the exact same arguments as Section 2 of \cite{AH}.  Thus, we shall devote the remainder of the paper to the proofs of Propositions \ref{mainprop1} and \ref{mainprop2}, which are, following some preparatory work in Section~\ref{sec 2}, presented in Sections \ref{pfofProp1} and \ref{sec:good-density}, respectively.

\subsection{Notations} The following notations and conventions are used throughout the paper.\\
$f =O(g)$ or $f \ll g$ means $|f| \leq cg$ for some unspecified
positive constant $c$. \newline
$\epsilon$ denotes an arbitrary positive number, whereas $\epsilon_0$ denotes a fixed positive constant. \newline
$K=\QQ(i)$ and $\mathcal{O}_K$ denote the ring of integers of $K$.  \newline
The Gothic letters $\fa$, $\fb$, $\cdots$ represent ideals of $\mathcal{O}_K$. \newline
The norm of an integer $a \in \mathcal{O}_K$ is written as $\norm(a)$. The norm of an ideal $\fa$ is written as $\norm(\fa)$. \newline
$\mu_{[i]}$ denotes the M\"obius function on $\mathcal{O}_K$. \newline
$\zeta_{K}(s)$ is the Dedekind zeta function for the field $K$.

\section{Preliminaries}
\label{sec 2}

\subsection{Distribution and characteristic functions}
  A function $F: \mathbb{R} \rightarrow [0, 1]$ is said to be a distribution function if $F$ is non-decreasing, right-continuous with $F(-\infty)=0$ and
$F(+\infty)=1$.   For example,
$$F(z)=\int\limits_{-\infty}^{z} M(t) \dif t,$$
where $M(t)$ is non-negative and $\int_{-\infty}^{\infty} M(t) dt=1$. In this case, $M$ called tthe density function of $F$.
The characteristic function of $F$, $\varphi_{F}(y)$, is the Fourier transform of the measure $dF(z)$, i.e.
$$\varphi_F(y):= \int\limits_{-\infty}^{\infty} e^{iy z} \dif F(z).$$
For a more detailed discussion, we refer the reader to \cite{AH} and the references therein.

\subsection{Quartic residue symbol}
\label{sec2.0}
     The symbol $(\frac{\cdot}{n})_4$ is the quartic
residue symbol in the ring $\mz[i]$.  For a prime $\varpi \in \mz[i]$
with $\norm(\varpi) \neq 2$, the quartic character is defined for $a \in
\mz[i]$, $(a, \varpi)=1$ by $\leg{a}{\varpi}_4 \equiv
a^{(\norm(\varpi)-1)/4} \pmod{\varpi}$, with $\leg{a}{\varpi}_4 \in \{
\pm 1, \pm i \}$. When $\varpi | a$, we define
$\leg{a}{\varpi}_4 =0$.  Then the quartic character can be extended
to any composite $n$ with $(\norm(n), 2)=1$ multiplicatively. We extend the definition of $\leg{\cdot }{n}_4$ to $n=1$ by setting $\leg{\cdot}{1}_4=1$. \newline

   Note that in $\intz[i]$, every ideal co-prime to $2$ has a unique
generator congruent to 1 modulo $(1+i)^3$.  Such a generator is
called primary. Recall that \cite[Theorem 6.9]{Lemmermeyer} the quartic reciprocity law states
that for two primary integers  $m, n \in \mz[i]$,
\begin{align}
\label{quarticrec}
 \leg{m}{n}_4 = \leg{n}{m}_4(-1)^{((\norm(n)-1)/4)((\norm(m)-1)/4)}.
\end{align}

From the supplement theorem to the quartic reciprocity law (see for example, Lemma 8.2.1 and Theorem 8.2.4 in \cite{BEW}),
we have for $n=a+bi$ being primary,
\begin{align*}
  \leg {i}{n}_4=i^{(1-a)/2} \qquad \mbox{and} \qquad  \hspace{0.1in} \leg {1+i}{n}_4=i^{(a-b-1-b^2)/4}.
\end{align*}
   It follows that for any $c \equiv 1 \pmod {16}$,
\begin{align*}
  \chi_c(i)=\chi_c(1+i)=1.
\end{align*}
   The above shows that $\chi_c$ is trivial on units, hence it can be regarded as a primitive quartic character of the $\langle c \rangle$-ray class group of $K$ when $c$ is square-free.

\subsection{Evaluation of $\mathcal{S}^*(Y)$ and $\mathcal{S}(Y)$} \label{C}

To evaluate $\mathcal{S}^*(Y)$ and $\mathcal{S}(Y)$, defined in the statements of Proposition~\ref{mainprop1} and Theorem~\ref{mainthrm}, we note the following estimation from \cite[p. 7]{G&Zhao1}.
\begin{lemma}\label{lem:luo-lemma}
As $Y\to\infty$, we have for $(\fa, 2)=1$,
$$\sum_{\substack{c\in\mathcal{C}\\\gcd(\langle c \rangle),\fa)=1}}\exp\left(-\frac{\norm(c)}{Y}\right)=
C_{\fa}Y+O_\epsilon(Y^{1/2+\epsilon}\norm(\fa)^{\epsilon}),$$
where $$C_{\fa}=\frac{\res_{s=1}\zeta_{K}(s)}{\left| H_{\langle16\rangle} \right|\zeta_{K}(2)}\prod_{\substack{\fp|2\fa\\\fp\; \text{prime}}}
\left(1+\norm(\fp)^{-1}\right)^{-1},$$ and $\mathrm{res}_{s=1}\zeta_{K}(s)$ denotes the residue of $\zeta_{K}(s)$ at $s=1$, $H_{\langle 16\rangle}$ denotes the $\langle 16 \rangle$-ray class group of $K$.
\end{lemma}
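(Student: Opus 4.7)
My approach is a Mellin transform computation. Using the identity $e^{-x}=\frac{1}{2\pi i}\int_{(2)}\Gamma(s)x^{-s}\,\dif s$, I rewrite the sum as
\[
\sum_{\substack{c\in\mathcal{C}\\(\langle c\rangle,\fa)=1}}\exp\!\left(-\frac{\norm(c)}{Y}\right)=\frac{1}{2\pi i}\int_{(2)}\Gamma(s)Y^s D(s)\,\dif s,\quad D(s)=\sum_{\substack{c\in\mathcal C\\(\langle c\rangle,\fa)=1}}\norm(c)^{-s}.
\]
Since $c\equiv 1\imod{\langle16\rangle}$ pins down a unique associate of a generator (as discussed in Subsection~\ref{sec2.0}), the sum $D(s)$ runs over squarefree ideals coprime to $2\fa$ whose class in $H_{\langle16\rangle}$ is trivial. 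Orthogonality of characters modulo $\langle16\rangle$ then yields
\[
D(s)=\frac{1}{|H_{\langle16\rangle}|}\sum_{\chi\bmod\langle16\rangle}\prod_{\fp\nmid2\fa}\bigl(1+\chi(\fp)\norm(\fp)^{-s}\bigr),
\]
and the Euler product factors as $L(s,\chi)L(2s,\chi^{2})^{-1}$ times a finite product over $\fp\mid 2\fa$ of ratios $(1-\chi(\fp)\norm(\fp)^{-s})(1-\chi^{2}(\fp)\norm(\fp)^{-2s})^{-1}$, providing the meromorphic continuation of $D(s)$ into $\Re(s)>1/2$.

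Next I shift the contour from $\Re(s)=2$ to $\Re(s)=1/2+\epsilon$. Only the trivial character $\chi_0$ contributes a pole, coming from the simple pole of $\zeta_K(s)=L(s,\chi_0)$ at $s=1$; computing its residue and multiplying by $\Gamma(1)Y=Y$ produces exactly $C_\fa Y$, the claimed main term. For the shifted integral I bound $D(s)$ on the line $\Re(s)=1/2+\epsilon$ by combining (i) the convexity bound for Hecke $L$-functions over $K$, which depends polynomially on $1+|t|$ with the fixed modulus $\langle16\rangle$ contributing only $O(1)$; (ii) the non-vanishing and boundedness of $L(2s,\chi^{2})$ on $\Re(2s)=1+2\epsilon$, which lies outside the critical strip; and (iii) the divisor-type estimate $\prod_{\fp\mid2\fa}|1-\chi(\fp)\norm(\fp)^{-s}|\leq 2^{\omega(2\fa)}\ll_\epsilon\norm(\fa)^{\epsilon}$. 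The exponential decay of $\Gamma$ in vertical strips secures convergence of the $t$-integral and produces the error $O_\epsilon(Y^{1/2+\epsilon}\norm(\fa)^{\epsilon})$.

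\textbf{Main obstacle.} The delicate point is maintaining the $\norm(\fa)^{\epsilon}$ dependence uniformly in $\chi$ and $t$: a wasteful bound on the Euler factors at $\fp\mid\fa$ would inflate the error, so the divisor bound must be applied carefully, and one must also verify that no growth in $\fa$ is hidden inside the character-twisted $L$-values themselves. Because $\langle16\rangle$ is a fixed modulus, convexity alone is enough---no subconvexity or GRH input is required---and the rest of the argument is routine analytic number theory.
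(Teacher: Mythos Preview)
Your Mellin-transform approach is correct and is the standard route to such smoothed counting results; the paper itself does not supply a proof of this lemma but simply cites it from \cite{G&Zhao1}, so there is no in-paper argument to compare against. The computation you outline---orthogonality over $H_{\langle16\rangle}$, factoring the squarefree generating series as $L(s,\chi)/L(2s,\chi^{2})$ times a finite Euler correction at $\fp\mid 2\fa$, contour shift to $\Re(s)=1/2+\epsilon$, residue at $s=1$ from $\chi=\chi_{0}$ yielding $C_{\fa}Y$, and convexity plus $\Gamma$-decay for the error---is exactly the expected proof and almost certainly what the cited reference does as well.

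One small remark on the ``main obstacle'' you flag: your bound $2^{\omega(2\fa)}$ covers the numerators $1-\chi(\fp)\norm(\fp)^{-s}$, but you should also note that the denominators $1-\chi^{2}(\fp)\norm(\fp)^{-2s}$ lie on the line $\Re(2s)=1+2\epsilon$ and hence satisfy $\prod_{\fp\mid\fa}\bigl|1-\chi^{2}(\fp)\norm(\fp)^{-2s}\bigr|^{-1}\leq\prod_{\fp\mid\fa}(1-\norm(\fp)^{-1})^{-1}\ll\log\log\norm(\fa)\ll_{\epsilon}\norm(\fa)^{\epsilon}$; this is implicit in your plan but worth spelling out to secure the uniform $\norm(\fa)^{\epsilon}$ dependence.
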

  We deduce from Lemma \ref{lem:luo-lemma}, by setting $\fa=\langle1 \rangle$, that as $Y\rightarrow \infty$,
\begin{equation}
\label{N*}
\mathcal{S}^*(Y)= \sum_{\substack{c\in\mathcal{C}}}\exp\left(-\frac{\norm(c)}{Y}\right) \sim  \frac{2}{3}\frac{\res_{s=1}\zeta_{K}(s)}{\left| H_{\langle16\rangle} \right|\zeta_{K}(2)} Y,
\end{equation}
 and
 \[\mathcal{S}(Y)= \#\left\{c\in\mathcal{C}:\norm(c)\leq Y\right\}
\sim \frac{2}{3}\frac{\mathrm{res}_{s=1}\zeta_{K}(s)}{\left| H_{\langle 16\rangle}\right|\zeta_{K}(2)}Y.\]

\subsection{A zero density theorem}

   For $c\in \mathcal{C}$, the Hecke $L$-function associated with $\chi_c$ is defined by the Dirichlet series
$$L(s, \chi_c)=\sum_{0\neq \mathfrak{a} \subset \mathcal{O}_K } \frac{\chi_c(\mathfrak{a})}{\norm(\mathfrak{a})^s}, \; \Re(s)>1 .$$
$L(s, \chi_c)$ can be analytically continued to the entirety of $\comc$ and satisfies a functional equation relating its values at $s$ and at $1-s$.  We shall need the following zero density theorem for $L(s, \chi_c)$.
 \begin{lemma}{\cite[Corollary 1.6]{BGL}}
 \label{lem:zer-density}
 For $1/2< \sigma \leq  1$, $T\geq 1$ and $c\in\mathcal{C}$, let $N(\sigma,T, c)$
be the number of zeros $\rho =\beta +i\gamma$ of $L(s, \chi_{c})$ in the rectangle $\sigma\leq\beta\leq   1$ , $|\gamma|\leq T$ . Then
\[\sum_{\substack{c\in\mathcal{C}\\\norm(c)\leq Y}}N(\sigma,T,c)\ll Y^{g(\sigma)}T^{1+\frac{2-2\sigma}{3-2\sigma}}(YT)^{\varepsilon},\]
where \[g(\sigma)=\begin{cases}
\displaystyle \frac{8(1-\sigma)}{7-6\sigma},& \frac{1}{2}<\sigma\leq \frac56,\\ \\
\displaystyle \frac{2(10\sigma-7)(1-\sigma)}{24\sigma-12\sigma^2-11} & \frac56< \sigma\leq1.\end{cases}\]
 \end{lemma}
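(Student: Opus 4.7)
The plan is to establish this zero density estimate by the now-standard Montgomery--Huxley framework, adapted to the family of quartic Hecke characters $\{\chi_c : c\in\mathcal{C}\}$.  First I would introduce a mollifier
\[
M_X(s,\chi_c)=\sum_{\norm(\fa)\leq X}\frac{\mu_{[i]}(\fa)\chi_c(\fa)}{\norm(\fa)^s},
\]
an initial segment of the Dirichlet series for $1/L(s,\chi_c)$, with $X$ a small fixed power of $\norm(c)T$.  At any zero $\rho=\beta+i\gamma$ of $L(s,\chi_c)$ with $\beta\geq \sigma$ and $|\gamma|\leq T$, the identity $L(\rho,\chi_c)M_X(\rho,\chi_c)=0$ combined with the approximate functional equation for $L(s,\chi_c)$ forces a Dirichlet polynomial
\[
A(s,\chi_c)=\sum_{N_1<\norm(\fa)\leq N_2}\frac{a_\fa\,\chi_c(\fa)}{\norm(\fa)^s}
\]
of length $N_2=(\norm(c)T)^{1+\varepsilon}$ to satisfy $|A(\rho,\chi_c)|\gg 1$.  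Bounding $\sum_c N(\sigma,T,c)$ is thereby reduced to counting pairs $(c,\gamma)$ at which $A(\sigma+i\gamma,\chi_c)$ is large.

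The second step is to count these large values via a large sieve inequality for the family.  Using quartic reciprocity \eqref{quarticrec} and the normalization $c\equiv 1\imod{\langle 16\rangle}$, which removes the unit-twist factors, one can flip $\chi_c(\fa)=\chi_\fa(c)$ up to a controllable sign and open the square $\sum_{c\in\mathcal{C}}|A(\cdot,\chi_c)|^2$ by Poisson summation in $\OO_K$.  This yields a large sieve inequality of the shape
\[
\sum_{\substack{c\in\mathcal{C}\\\norm(c)\leq Y}}\Big|\sum_{\norm(\fa)\leq N}b_\fa\chi_c(\fa)\Big|^2\ll (Y+N)^{1+\varepsilon}\sum_\fa|b_\fa|^2,
\]
from which the Hal\'asz--Montgomery large-values lemma, combined with the mean value theorem in the $\gamma$-variable, yields the stated bound in the range $\tfrac12<\sigma\leq\tfrac56$ with $g(\sigma)=8(1-\sigma)/(7-6\sigma)$.

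For $\tfrac56<\sigma\leq 1$, the sharper exponent is obtained by Huxley's method: raise $A$ to a suitable power $k$, apply the large sieve to the resulting Dirichlet polynomial, and supplement with a fourth-moment bound
\[
\sum_{\substack{c\in\mathcal{C}\\\norm(c)\leq Y}}\int_{-T}^T\bigl|L(\tfrac12+it,\chi_c)\bigr|^4\,dt\ll (YT)^{1+\varepsilon}.
\]
The factor $T^{1+(2-2\sigma)/(3-2\sigma)}$ in the final estimate arises from balancing the $k$-th moment of $A$ in $t$ against the mean-value theorem.  I expect the hardest step to be the large sieve and the fourth moment themselves: quartic reciprocity produces Gauss-sum main terms and oscillatory residual terms that must be handled by careful Poisson summation over $\ZZ[i]$, and the hypothesis $c\equiv 1\imod{\langle 16\rangle}$ is precisely what makes $\chi_c$ trivial on units and well defined on ideals, so that the orthogonality of characters on the ray class group can close up the $c$-sum.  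Once these analytic inputs are available, the optimization of parameters that produces the piecewise function $g(\sigma)$, with its break at $\sigma=5/6$ where the two strategies balance, is entirely parallel to Montgomery's original treatment.
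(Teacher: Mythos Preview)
The paper does not prove this lemma at all: it is simply quoted as \cite[Corollary~1.6]{BGL}, with no argument given.  So there is no ``paper's own proof'' to compare against; your proposal is effectively a sketch of how the cited result is obtained.

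Your outline follows the correct overall architecture (mollifier/zero-detector, large values via a large sieve, Huxley's reflection for $\sigma$ near $1$), but one of your analytic inputs is too optimistic and would not yield the stated $g(\sigma)$.  You invoke a large sieve of the shape
\[
\sum_{\substack{c\in\mathcal{C}\\\norm(c)\leq Y}}\Bigl|\sum_{\norm(\fa)\leq N}b_\fa\chi_c(\fa)\Bigr|^2\ll (Y+N)^{1+\varepsilon}\sum_\fa|b_\fa|^2,
\]
but this optimal bound is \emph{not} known for quartic residue symbols.  What is available---and what the paper records as Lemma~\ref{largesieve}, again from \cite{BGL}---is the weaker estimate with an extra $(YN)^{2/3}$ term.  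The precise shape of $g(\sigma)$, in particular the numerator $8(1-\sigma)$ and denominator $7-6\sigma$ in the range $\tfrac12<\sigma\leq\tfrac56$ and the crossover at $5/6$, comes out of optimizing the Hal\'asz--Montgomery machinery against that $(Y+N+(YN)^{2/3})$ large sieve, not the optimal one.  If you feed in the optimal large sieve you claim, you would get a strictly smaller exponent and would not reproduce the lemma as stated.  Likewise, the clean fourth-moment bound $\sum_c\int|L(\tfrac12+it,\chi_c)|^4\,dt\ll (YT)^{1+\varepsilon}$ you assume for the large-$\sigma$ range is not known unconditionally for this family; the actual argument in \cite{BGL} works with the inputs that are available, and the somewhat unusual form of $g(\sigma)$ on $(\tfrac56,1]$ reflects that.
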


\subsection{The large sieve with quartic symbols and a P\'olya-Vinogradov type inequality}

  In the course of the proof of Theorem \ref{mainthrm}, we need the following large sieve type inequality for quartic residue symbols, which is a special case of \cite[Theorem 1.3]{BGL} and an improvement of \cite[Theorem 1.1]{G&Zhao}:
\begin{lemma}
\label{largesieve}
For $\varepsilon>0$ and $(b_\alpha)_{\alpha \in \mathcal{O}_K}$ be an arbitrary sequence of complex numbers, we have
\begin{equation}
\label{eqn:large-sieve}
\sumflat_{\substack{\lambda\equiv1\imod{\langle (1+i)^3 \rangle}\\\norm(\lambda)\leq M}}\left| \ \sumflat_{\substack{\alpha\equiv1\imod{\langle (1+i^3\rangle}\\
\norm(\alpha)\leq N}}b_{\alpha}~ \left(\frac{\alpha}{\lambda}\right)_4  \right|^2\ll_{\epsilon} \left( M+N+(MN)^{2/3} \right)(MN)^{\varepsilon}
\sumflat_{\norm(\alpha)\leq N}|b_{\alpha}|^{2},
\end{equation}
where $\sum^{\flat}$ means that the summation runs over the square-free elements of $\mathcal{O}_K$.
\end{lemma}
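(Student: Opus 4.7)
The plan is to prove the bound by a Heath--Brown-style argument adapted to the quartic setting over $\mathcal{O}_K = \ZZ[i]$. First I would invoke the standard large sieve duality to reduce \eqref{eqn:large-sieve} to its dual assertion, in which the outer sum runs over $\alpha$ and the inner over $\lambda$ against an arbitrary sequence $(a_\lambda)$. Inserting a smooth cutoff $W(\norm(\alpha)/N)$ with rapidly decaying Fourier transform, and expanding the square, produces a double sum over $\lambda_1, \lambda_2$ with inner expression $\sum_\alpha (\alpha/\lambda_1)_4 \overline{(\alpha/\lambda_2)_4} W(\norm(\alpha)/N)$; the diagonal $\lambda_1=\lambda_2$ already accounts for the term of size $N$ on the right-hand side, via orthogonality of the quartic character on units together with the evaluation of $\mathcal{S}(Y)$-type sums from Section \ref{C}.

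For the off-diagonal contribution, I would apply the quartic reciprocity law \eqref{quarticrec}, together with the supplementary laws recorded in Section \ref{sec2.0}, to swap the roles of the summation variable and the moduli. The inner sum in $\alpha$ then becomes a quartic Hecke character sum modulo (essentially) the square-free part of $\lambda_1\lambda_2$, which is amenable to Poisson summation on $\ZZ[i] \cong \ZZ^2$. The Poisson dual is a sum over the dual lattice weighted by quartic Gauss sums attached to $\lambda_1\lambda_2$, evaluated at $\widehat{W}$ on scale $N/\norm(\lambda_1\lambda_2)$. The zero-frequency term survives only when the combined character is principal, which after primarity considerations forces $\lambda_1\lambda_2$ to be a fourth power; together with the small-modulus range of the dual sum, this contributes the $M$-term on the right-hand side.

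For the nonzero frequencies I would bound the quartic Gauss sums using Weil-type square-root cancellation, dyadically decompose the dual variable, and balance the resulting Gauss sum estimate against the trivial bound in each window. Optimizing the crossover produces the hybrid term $(MN)^{2/3}(MN)^{\varepsilon}$. The main obstacle, and the reason the exponent is $2/3$ rather than the $1/2$ one would hope for from pointwise square-root cancellation, is that quartic Gauss sums do not admit the Weil bound at square-full moduli; one must factor $\lambda_1\lambda_2$ into its square-free and square-full parts and treat the latter by a separate, weaker but more robust estimate. It is precisely the careful handling of the square-full contribution, of the type developed in \cite[Theorem 1.3]{BGL}, that delivers the exponent $2/3$ and constitutes the improvement over the earlier bound of \cite{G&Zhao}. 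A secondary technicality is to preserve the primary normalization $\alpha, \lambda \equiv 1 \imod{\langle (1+i)^{3}\rangle}$ across the reciprocity step, which is handled by splitting into residue classes modulo a small fixed modulus containing $(1+i)^3$ before dualizing.
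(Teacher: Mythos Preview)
The paper does not prove this lemma: it is quoted as a special case of \cite[Theorem 1.3]{BGL} (itself an improvement of \cite[Theorem 1.1]{G&Zhao}) and used as a black box. So there is no ``paper's own proof'' to compare against; any correct argument you supply would be supplementary rather than a reproduction.

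On the substance of your outline: the overall architecture (duality, smoothing, expanding the square, quartic reciprocity followed by Poisson in $\ZZ[i]$, separating the zero frequency) is the right one and is indeed how Heath--Brown-type large sieves for power-residue symbols are proved. Two comments, though. First, your attribution of the $(MN)^{2/3}$ term to the failure of the Weil bound at square-full moduli is not quite the mechanism. In Heath--Brown's method the nonzero dual frequencies, after extracting the Gauss-sum weights, reorganize into another bilinear form of the same shape as the original, with new size parameters; one obtains a recursive inequality for the norm $B(M,N)$ of the bilinear form, and iterating and balancing that recursion is what produces the exponent $2/3$. The square-full part of $\lambda_1\lambda_2$ is a genuine nuisance, but it is dispatched by factoring it out and bounding its contribution by divisor-type sums, not by an optimization that fixes the final exponent. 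Second, the diagonal in your step is not handled by the $\mathcal{S}(Y)$-type evaluation of Section~\ref{C}; the diagonal contribution is simply $\sum_\lambda |a_\lambda|^2 \cdot (\text{smoothed count of }\alpha)$, which gives the $N$-term directly without any input about square-free counts in residue classes. If you intend to actually write out a proof rather than cite \cite{BGL}, you will need to make the recursive step explicit.
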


   We shall also need the following P\'olya-Vinogradov type inequality for $\mathfrak{f}$-ray class characters of $K$.
\begin{lemma}{\cite[Lemma 3.1]{G&Zhao}}
\label{H-P}
Let $K=\mathbb{Q}(i)$ and $\chi$ be a non-trivial character (not necessarily primitive) of $\mathfrak{f}$-ray class group of $K$. Then for $Y>1$ and $\varepsilon>0$, we have
\begin{equation}\label{eqn:polya}\sum_{a\equiv 1 \imod{\langle (1+i)^3 \rangle}}\chi(a)\exp \left( -\frac{\norm(a)}{Y} \right) \ll_{\epsilon}\norm(\mathfrak{f})^{1/2+\varepsilon}.
\end{equation}
\end{lemma}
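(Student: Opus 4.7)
The plan is to prove this P\'olya--Vinogradov type bound via Mellin inversion and a contour shift, following the standard template for smoothed character sums. Since the primary elements of $\mathcal{O}_K$ are in bijection with the integral ideals coprime to $\langle 2\rangle$ via $a\mapsto\langle a\rangle$, the Dirichlet series
\[
\widetilde L(s,\chi):=\sum_{a\equiv 1\imod{\langle(1+i)^3\rangle}}\chi(a)\,\norm(a)^{-s}
\]
differs from the full Hecke $L$-function $L(s,\chi)$ attached to $\chi$ only by finitely many Euler factors at primes dividing $\langle 2\rangle$, each of size $O(1)$. Using $e^{-x}=\frac{1}{2\pi i}\int_{(c)}\Gamma(s)\,x^{-s}\,ds$ for $c>0$ and interchanging summation and integration (legitimate on $\Re(s)=2$ by absolute convergence), the left-hand side of \eqref{eqn:polya} becomes $\frac{1}{2\pi i}\int_{(2)}\Gamma(s)\,Y^{s}\,\widetilde L(s,\chi)\,ds$.

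Next, shift the contour from $\Re(s)=2$ to $\Re(s)=-\eta$ for some fixed small $\eta>0$. Because $\chi$ is non-trivial, $L(s,\chi)$ is entire, and the only singularity of the integrand inside the strip is the simple pole of $\Gamma$ at $s=0$. The residue there is $L(0,\chi)$; in fact, for a ray class character of the imaginary quadratic field $K=\mathbb{Q}(i)$ the completed $L$-function $\Lambda(s,\chi)=\norm(\mathfrak{f})^{s/2}\Gamma_{\mathbb{C}}(s)L(s,\chi)$ (with $\Gamma_{\mathbb{C}}(s)=2(2\pi)^{-s}\Gamma(s)$) is entire, so the pole of $\Gamma$ at $s=0$ is cancelled by a trivial zero of $L(s,\chi)$, and no residual main term appears. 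Even without exploiting this cancellation, the functional equation $\Lambda(s,\chi)=W(\chi)\Lambda(1-s,\bar\chi)$ immediately yields $|L(0,\chi)|\ll \norm(\mathfrak{f})^{1/2+\varepsilon}$, which is within the target.

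For the integral on $\Re(s)=-\eta$, apply the functional equation to express $L(-\eta+it,\chi)$ in terms of $L(1+\eta-it,\bar\chi)$: the latter is $O(1)$ by absolute convergence, the conductor factor is $\norm(\mathfrak{f})^{1/2+\eta}$, and the gamma ratio $|\Gamma_{\mathbb{C}}(1+\eta-it)/\Gamma_{\mathbb{C}}(-\eta+it)|$ is bounded by $(1+|t|)^{1+2\eta}$ via Stirling. Thus $|L(-\eta+it,\chi)|\ll \norm(\mathfrak{f})^{1/2+\eta}(1+|t|)^{1+2\eta}$. Combined with the exponential decay $|\Gamma(-\eta+it)|\ll |t|^{-1/2-\eta}e^{-\pi|t|/2}$ and the trivial bound $|Y^{s}|=Y^{-\eta}\leq 1$ (valid since $Y>1$), the integrand is integrable in $t$, and the resulting integral is $\ll \norm(\mathfrak{f})^{1/2+\eta}$. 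Choosing $\eta\leq\varepsilon$ then yields the claimed estimate. The main technical obstacle is the careful bookkeeping of the functional equation for ray class characters of $K$ with an arbitrary, possibly imprimitive modulus $\mathfrak{f}$---identifying the correct conductor, the $\Gamma_{\mathbb{C}}$ archimedean factor, and the trivial zero at $s=0$---particularly when $\mathfrak{f}$ and $\langle 2\rangle$ share common factors; once that is sorted, the Stirling-plus-exponential-decay estimate on the shifted line is routine and gives a bound uniform in $Y$.
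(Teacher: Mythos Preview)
The paper does not prove this lemma at all; it is quoted verbatim from \cite[Lemma 3.1]{G&Zhao} and used as a black box. Your Mellin-inversion-plus-contour-shift argument is the standard proof of such smoothed P\'olya--Vinogradov bounds over number fields and is essentially what one finds in the cited source, so your proposal is correct and in line with the intended argument.

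One small remark: your handling of the imprimitive case deserves slightly more care than the closing sentence suggests. If $\chi$ has conductor $\mathfrak{f}_0\mid\mathfrak{f}$, the functional equation lives at level $\mathfrak{f}_0$, and the passage from $L(s,\chi_0)$ to $L(s,\chi)$ introduces the finite product $\prod_{\mathfrak{p}\mid\mathfrak{f},\ \mathfrak{p}\nmid\mathfrak{f}_0}(1-\chi_0(\mathfrak{p})\norm(\mathfrak{p})^{-s})$. On $\Re(s)=-\eta$ each factor is $\leq 1+\norm(\mathfrak{p})^{\eta}$, and the whole product is $\ll \norm(\mathfrak{f})^{O(\eta)}$; combined with $\norm(\mathfrak{f}_0)^{1/2+\eta}\leq\norm(\mathfrak{f})^{1/2+\eta}$ this still yields $\ll\norm(\mathfrak{f})^{1/2+\varepsilon}$ after choosing $\eta$ small. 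Similarly, the discrepancy between $\widetilde L$ and $L$ at the prime above $2$ contributes only a bounded factor on $\Re(s)=-\eta$. With these bookkeeping points made explicit, the argument is complete.
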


\subsection{A Dirichlet series representation for $\exp\left(iy\mathcal{L}_{c} (s)\right)$}

   For $u \in \mathbb{C}$ and any non-negative integer $r$, we define the function $H_{r}(u)$ by $H_{0}(u)=1$ and for $r\geq1$,
\[H_{r}(u)=\frac{1}{r!}u(u+1)\cdots(u+r-1).\]
  This implies that
\begin{equation}
\label{H}
 \exp\left(-u\log(1-t)\right)=\sum_{r=0}^{\infty}H_{r}(u)t^{r},\quad\text{for } |t|<1.
\end{equation}
  We further define the arithmetic function $\lambda_{y}(\fa)$ on the integral ideals of $K$ as follows:
  \begin{equation} \label{lambdadef}
  \lambda_y(\fa)=\prod_{\mathfrak{p}} \lambda_y({\mathfrak{p}}^{\alpha_\mathfrak{p}})\quad\text{ and}\quad\lambda_y(\mathfrak{p} ^{\alpha_\mathfrak{p}} ) =H_{\alpha_\mathfrak{p}} \left(iy \right).
\end{equation}

   Similar to \cite[Lemma 4.1]{AH}, the following Lemma gives a Dirichlet series representation for $\exp\left(iy\mathcal{L}_{c} (s)\right)$.
\begin{lemma}\label{lem:exp(iyLc)}
Let $y\in\mathbb{R}$ and $s\in\mathbb{C}$ with $\Re(s)>1$. Then
\[
\exp\left(iy\mathcal{L}_{c}(s)\right)=\sum_{\substack{\fa,\fb\subset\mathcal{O}_K \\ \fa,\fb \neq 0}}\frac{\lambda_{y}(\fa)\lambda_{y}(\fb)\chi_{c}(\fa\fb^{3})}{\norm(\fa\fb)^{s}},
\]
where $\lambda_{y}$ is given in \eqref{lambdadef}.  Moreover, the above series is absolutely convergent.
\end{lemma}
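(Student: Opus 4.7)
The plan is to start from the Euler product. For $\Re(s)>1$, both $L(s,\chi_{c})$ and $L(s,\overline{\chi}_{c})$ admit absolutely convergent Euler products over the prime ideals $\fp$ of $\mathcal{O}_K$, and taking logarithms and then exponentials of $\mathcal{L}_{c}(s)=\log L(s,\chi_{c})+\log L(s,\overline{\chi}_{c})$ gives
\[
\exp\!\left(iy\mathcal{L}_{c}(s)\right)=\prod_{\fp}\bigl(1-\chi_{c}(\fp)\norm(\fp)^{-s}\bigr)^{-iy}\bigl(1-\overline{\chi}_{c}(\fp)\norm(\fp)^{-s}\bigr)^{-iy}.
\]
I would apply \eqref{H} locally with $u=iy$ and $t=\chi_{c}(\fp)\norm(\fp)^{-s}$ (respectively $t=\overline{\chi}_{c}(\fp)\norm(\fp)^{-s}$), which is legitimate since $|t|\leq\norm(\fp)^{-\Re(s)}<1$. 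This writes each local factor as a power series in $\norm(\fp)^{-s}$ with coefficients $H_{\alpha}(iy)\chi_{c}(\fp^{\alpha})$ and $H_{\beta}(iy)\overline{\chi}_{c}(\fp^{\beta})$ respectively.

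Next I would expand the resulting Euler product into a Dirichlet series. The total multiplicativity of $\chi_{c}$, together with the multiplicativity built into the definition \eqref{lambdadef} of $\lambda_{y}$, converts each of the two products of local sums into a single sum over integral ideals: one of the form $\sum_{\fa}\lambda_{y}(\fa)\chi_{c}(\fa)\norm(\fa)^{-s}$ and the other $\sum_{\fb}\lambda_{y}(\fb)\overline{\chi}_{c}(\fb)\norm(\fb)^{-s}$. Because $\chi_{c}$ is quartic we have $\overline{\chi}_{c}(\fb)=\chi_{c}(\fb)^{3}=\chi_{c}(\fb^{3})$, and multiplying the two series produces exactly the claimed $\sum_{\fa,\fb}\lambda_{y}(\fa)\lambda_{y}(\fb)\chi_{c}(\fa\fb^{3})\norm(\fa\fb)^{-s}$.

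The remaining task is to justify the absolute convergence asserted in the lemma, which will also legitimise the interchanges of summation and product used above. Writing $H_{r}(iy)=\frac{1}{r!}\prod_{k=0}^{r-1}(iy+k)$ and using the crude bound $|iy+k|\leq|y|+k$ yields $|H_{r}(iy)|\leq H_{r}(|y|)$, whence $\sum_{r\geq 0}|H_{r}(iy)|t^{r}\leq (1-t)^{-|y|}$ for $0\leq t<1$. Expanding the Euler product for $\sum_{\fa}|\lambda_{y}(\fa)|\norm(\fa)^{-\sigma}$ and comparing with $\prod_{\fp}(1-\norm(\fp)^{-\sigma})^{-|y|}=\zeta_{K}(\sigma)^{|y|}$ then shows that
\[
\sum_{\fa,\fb}\frac{|\lambda_{y}(\fa)\lambda_{y}(\fb)|}{\norm(\fa\fb)^{\Re(s)}}\leq\zeta_{K}(\Re(s))^{2|y|}<\infty
\]
for $\Re(s)>1$. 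The argument presents no genuine obstacle; the only mildly delicate point is the control on $|H_{r}(iy)|$ needed to secure convergence, and the only place where the quartic nature of $\chi_{c}$ enters is the identification $\overline{\chi}_{c}=\chi_{c}^{3}$ that packages both characters into the single factor $\chi_{c}(\fa\fb^{3})$.
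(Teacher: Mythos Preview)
Your proof is correct and follows essentially the same approach as the paper, which omits the proof and refers to \cite[Lemma 4.1]{AH}; that argument proceeds exactly as you do, expanding each Euler factor via \eqref{H} with $u=iy$, assembling the Dirichlet series by multiplicativity of $\lambda_{y}$ and $\chi_{c}$, and using the quartic relation $\overline{\chi}_{c}=\chi_{c}^{3}$ together with the bound $|H_{r}(iy)|\leq H_{r}(|y|)$ for absolute convergence.
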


   We omit the proof of Lemma \ref{lem:exp(iyLc)} as it is similar to the proof of \cite[Lemma 4.1]{AH}.  Moreover, It is shown in \cite[p. 92]{I-M} that for any $\varepsilon, R>0$ and all $|y|\leq R$,
we have
\begin{equation}\label{eqn:lambda-bound}
\lambda_{y}(\fa)\ll_{\epsilon,R}\norm(\fa)^{\varepsilon}.
\end{equation}

\section{PROOF OF PROPOSITION \ref{mainprop1}}
\label{pfofProp1}

To establish Proposition~\ref{mainprop1}, we first prove, in the next three section, Proposition~\ref{newprop} which gives a Dirichlet series (see \eqref{M}) representation for the limit in Proposition~\ref{mainprop1}.  Then in Section \ref{sec:prod}, we prove Proposition \ref{lem:euler} which renders a product representation for the afore-mentioned Dirichlet series to show that it is the same as $\varphi_{\sigma}(y)$ defined in \eqref{phiydef}. \newline

\begin{prop}
\label{newprop}
Fix $\sigma=1/2+\varepsilon_0$ for some
$\varepsilon_0>0$. Then for all $y\in\mathbb{R}$ we have
\begin{equation*}
\label{main-limit}
\lim_{Y\to\infty}\frac{1}{\mathcal{N}^{*}(Y)} \sumSTAR_{c\in \mathcal{C}}\exp\left(iy\mathcal{L}_{c} (\sigma)\right)\exp(-\norm(c)/Y)=\widetilde{M}_{\sigma}(y).
\end{equation*}
Here $\widetilde{M}_{\sigma}(y)$ is given by the following absolutely convergent Dirichlet series
\begin{align}
\label{M}
\widetilde{M}_{\sigma}(y)=\sum_{r_{1},r_{2}\geq0}\frac{\lambda_{y}(\langle1+i\rangle^{r_1})\lambda_{y}(\langle1+i\rangle^{r_2})}{2^{(r_1+r_{2})\sigma}}
\sum_{\substack{\fa,\fb,\fm\subset\mathcal{O}_K\\\gcd(\fa\fb\fm,\langle2\rangle)=1\\\gcd(\fa,\fb)=1}}
\frac{\lambda_{y}(\fa^4\fm)\lambda_{y}(\fb^4\fm)}{\norm(\fa^4\fb^4\fm^2)^{\sigma}\displaystyle{\prod_{\substack{\fp|\fa\fb\fm\\\fp\; \text{prime}}}
\left(1+\norm(\fp)^{-1}\right)}}.
\end{align}
\end{prop}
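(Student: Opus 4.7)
The starting point is to substitute the Dirichlet series representation of Lemma~\ref{lem:exp(iyLc)} into the averaged sum and exchange the order of summation, reducing matters to evaluating, for each pair of integral ideals $(\fa,\fb)$, the weighted character sum
\[
S(\fa,\fb;Y) := \sumSTAR_{c\in\mathcal{C}} \chi_{c}(\fa\fb^{3})\exp(-\norm(c)/Y).
\]
Since Lemma~\ref{lem:exp(iyLc)} is formally stated only for $\Re(s)>1$, one first truncates the $(\fa,\fb)$-series at $\norm(\fa\fb)\leq X$ for a parameter $X=Y^{\eta}$ with $\eta>0$ to be optimized, and uses Lemma~\ref{lem:zer-density} to control the contribution of those $c$ for which $L(s,\chi_c)$ has zeros in a thin box to the right of $\sigma$; this is standard and follows \cite[Section 4]{AH} closely.

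The first algebraic step is to split off the ramified part above $\langle 2\rangle$: write $\fa=\langle 1+i\rangle^{r_{1}}\tilde{\fa}$ and $\fb=\langle 1+i\rangle^{r_{2}}\tilde{\fb}$ with $\gcd(\tilde{\fa}\tilde{\fb},\langle 2\rangle)=1$. Because $\chi_{c}(1+i)=1$ whenever $c\equiv 1\pmod{\langle 16\rangle}$ (see Section~\ref{sec2.0}), the $(1+i)$-part decouples and contributes precisely the outer factor $\sum_{r_{1},r_{2}}\lambda_{y}(\langle 1+i\rangle^{r_{1}})\lambda_{y}(\langle 1+i\rangle^{r_{2}})/2^{(r_{1}+r_{2})\sigma}$ appearing in \eqref{M}. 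For the remaining inner sum over $(\tilde{\fa},\tilde{\fb})$, the second step is to apply quartic reciprocity~\eqref{quarticrec}: since $c\equiv 1\pmod{\langle 16\rangle}$ forces $(\norm(c)-1)/4$ to be divisible by $8$, the sign factor is trivial and $\chi_{c}(\tilde{\fa}\tilde{\fb}^{3})=\chi_{\tilde{\fa}\tilde{\fb}^{3}}(c)$ for primary generators. As a character in the variable $c$, this is the principal character precisely when $\tilde{\fa}\tilde{\fb}^{3}$ is the fourth power of an ideal; an elementary $\gcd$-argument then produces the unique parametrization $\tilde{\fa}=\fa^{4}\fm$, $\tilde{\fb}=\fb^{4}\fm$ with $\gcd(\fa,\fb)=1$ and $\fm=\gcd(\tilde{\fa},\tilde{\fb})$, matching exactly the triple summation in~\eqref{M}.

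On the principal side, Lemma~\ref{lem:luo-lemma} gives
\[
\sum_{\substack{c\in\mathcal{C}\\\gcd(\langle c\rangle,\tilde{\fa}\tilde{\fb})=1}}\exp(-\norm(c)/Y)=C_{\tilde{\fa}\tilde{\fb}}\,Y+O_{\epsilon}(Y^{1/2+\epsilon}\norm(\tilde{\fa}\tilde{\fb})^{\epsilon}),
\]
and the ratio $C_{\tilde{\fa}\tilde{\fb}}/C_{\langle 1\rangle}$ is exactly the Euler factor $\prod_{\fp\mid\fa\fb\fm}(1+\norm(\fp)^{-1})^{-1}$ appearing in~\eqref{M}; dividing by $\mathcal{S}^{*}(Y)\sim C_{\langle 1\rangle}Y$ from~\eqref{N*} produces the claimed Dirichlet series $\widetilde{M}_{\sigma}(y)$. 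For the non-principal side, after detecting the squarefree and ray-class conditions on $c$ by M\"obius inversion, Lemma~\ref{H-P} bounds the inner sum by $O_{\epsilon}(\norm(\tilde{\fa}\tilde{\fb}^{3})^{1/2+\epsilon})$, and combining this with the bound $\lambda_{y}(\fa)\ll_{\epsilon}\norm(\fa)^{\epsilon}$ from~\eqref{eqn:lambda-bound} shows that the total off-diagonal contribution is $o(Y)$ provided $\eta$ is chosen small enough.

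The main obstacle is the simultaneous calibration of the truncation parameter $X=Y^{\eta}$ and the zero-free region in Lemma~\ref{lem:zer-density}: $\eta$ must be small enough that the P\'olya--Vinogradov error from Lemma~\ref{H-P} stays $o(Y)$, yet large enough that the tail of the main-term series~\eqref{M} beyond $X$ is negligible. Absolute convergence of~\eqref{M} for $\sigma>1/2$ itself is relatively soft: the bound $\lambda_{y}(\fa)\ll_{\epsilon}\norm(\fa)^{\epsilon}$ against the denominator $\norm(\fa^{4}\fb^{4}\fm^{2})^{\sigma}$ yields a convergent triple Dirichlet series once $\sigma>1/2$. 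The genuinely delicate technical point is the use of the zero-density estimate to justify, for almost every $c\in\mathcal{C}$, the substitution of the $\Re(s)>1$ Dirichlet expansion at the critical-strip point $\sigma$; this is the step that most closely mirrors the corresponding argument in \cite[Section 4]{AH}.
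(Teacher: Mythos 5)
Your outline is essentially the paper's own route: zero-density (Lemma \ref{lem:zer-density}) to isolate the $c$ for which the Dirichlet expansion can be used at $\sigma$, exchange of summation, decoupling of the $\langle 1+i\rangle$-part via $\chi_c(1+i)=1$, quartic reciprocity to convert $\chi_c(\fa\fb^3)$ into a character in $c$, the fourth-power parametrization $\fa^4\fm,\fb^4\fm$ with $\gcd(\fa,\fb)=1$ for the main term, and Lemma \ref{lem:luo-lemma} whose ratio $C_{\fa\fb\fm}/C_{\langle1\rangle}$ produces exactly the Euler factor in \eqref{M} after dividing by $\mathcal{S}^*(Y)$. Two points of divergence are worth noting. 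First, the paper keeps the smooth weight $\exp(-\norm(\fa\fb)/X)$ and expresses the passage from $\Re(s)>1$ to $\sigma$ by a contour integral over $L_{Y,\epsilon,A}$ (Lemma \ref{rep}), rather than a sharp truncation at $\norm(\fa\fb)\le X$; this is cosmetic, but the smooth cutoff is what makes the later Mellin shift giving the $O(YX^{\epsilon-\epsilon_0})$ error painless. Second, and more substantively, for the non-fourth-power terms the paper splits the M\"obius variable $d$ (detecting square-freeness of $c$) at a parameter $B$, treats $\norm(d)\le B$ by Lemma \ref{H-P} and the range $\norm(d)>B$ by Cauchy--Schwarz and the quartic large sieve (Lemma \ref{largesieve}), whereas you invoke only the P\'olya--Vinogradov bound and never mention the $d$-sum. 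Your simpler route does close, but only because the proposition demands no rate: the bound of Lemma \ref{H-P} is uniform in the length of the $c$-sum, so summing it over the $\asymp Y^{1/2+\epsilon}$ relevant $d$ (the tail $\norm(d)>Y^{1/2+\epsilon}$ being trivially negligible from the weight $\exp(-\norm(d^2c)/Y)$) gives an off-diagonal contribution $\ll Y^{1/2+\epsilon}X^{5/2-\sigma+\epsilon}$, which is $o(Y)$ once $X=Y^\eta$ with $\eta$ small; you should make this $d$-bookkeeping explicit, since it is precisely where the extra factor of $Y^{1/2}$ enters and where the paper instead deploys the $R$/$S$ split and Lemma \ref{largesieve} to obtain a sharper error (hence more freedom in $X$), which your version forgoes.
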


\subsection{Application of the zero density estimate}\label{sec:zero-density}
Let $A>0$ be fixed and $R_{Y, \varepsilon, A}$ be the rectangle with  the vertices $1\pm i(\log{Y})^A$ and $(1+\varepsilon)/2 \pm i(\log{Y})^A$.
Let $\mathcal{Z}^c$ be the set consisting of $c \in \mathcal{C}$ such that $L(s, \chi_c)$ does not vanish in $R_{Y, \epsilon, A}$.  Also, set $\mathcal{Z} = \mathcal{C} \setminus \mathcal{Z}^c$.  Note that $\mathcal{Z}$ and $\mathcal{Z}^c$ vary with $Y$, $\varepsilon$, and $A$. \newline

  Using arguments similar to those in the proof of \cite[Lemma 4.3]{AH}, we see that for $\sigma> 1/2$ as fixed in Proposition \ref{newprop} and
a sufficiently small $\varepsilon>0$, we have
\begin{equation}
\label{main1}
\sumSTAR_{c\in \mathcal{C}}\exp\left(iy\mathcal{L}_{c} (\sigma)\right)\exp(-\norm(c)/Y)= \sum_{c\in \mathcal{Z}^c} \exp\left(iy\mathcal{L}_{c} (\sigma)\right)\exp(-\norm(c)/Y)+ O(Y^\delta).
\end{equation}

Furthermore, for $c\in {\mathcal{Z}}^c$, $y\in\mathbb{R}$, and $1/2<\sigma\leq1$, we use Lemma \ref{lem:exp(iyLc)} and arguments similar to those in the proof of \cite[Lemma 4.4]{AH} to derive the following lemma giving a representation of $\exp\left(iy\mathcal{L}_{c} (\sigma)\right)$ as a sum of an infinite sum and a certain contour integral.
\begin{lemma}
\label{rep}
 Let $\varepsilon>0$ be given with $\sigma\geq 1/2+\varepsilon$. Suppose that $\sigma\leq1$.  If $c\in \mathcal{Z}^c$, then
$$\exp\left(iy\mathcal{L}_{c} (\sigma)\right)= \sum_{\substack{\fa,\fb\subset\mathcal{O}_K \\ \fa,\fb \neq 0}}\frac{\lambda_{y}(\fa)\lambda_{y}(\fb)\chi_{c}(\fa\fb^{3})}{\norm(\fa)^{{\sigma}}\norm(\fb)^{\sigma}}\exp\left(-\frac{\norm(\fa\fb)}{X}\right)-\frac{1}{2\pi i} \int\limits_{L_{Y, \epsilon, A}} \exp\left(iy\mathcal{L}_{c} (\sigma+u)\right)\Gamma(u) X^u \dif u, $$
where $L_{Y, \epsilon, A}$ is the contour that connects, by straight line segments, the points $(1-\sigma+\varepsilon/2) + i \infty$, $(1-\sigma+\varepsilon/2) + i (\log Y)^A$, $- \varepsilon/2 + i (\log Y)^A$, $-\varepsilon/2 - i(\log Y)^A$, $(1-\sigma+\varepsilon/2) - i (\log Y)^A$ and $(1-\sigma+\varepsilon/2) - i \infty$ .
\end{lemma}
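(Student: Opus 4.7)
My plan is to start from the Dirichlet series given by Lemma \ref{lem:exp(iyLc)} and introduce the smoothing factor $\exp(-\norm(\fa\fb)/X)$ by Mellin inversion. Specifically, for $x>0$ and any $\alpha>0$ we have
\[
e^{-x}=\frac{1}{2\pi i}\int_{(\alpha)}\Gamma(u)x^{-u}\,\dif u.
\]
Choosing $\alpha=1-\sigma+\varepsilon/2>0$ (so that $\sigma+\alpha>1$ and Lemma~\ref{lem:exp(iyLc)} applies with $s=\sigma+u$) and applying this to $x=\norm(\fa\fb)/X$, then swapping the resulting absolutely convergent $u$-integral with the sums over $\fa,\fb$, I would obtain
\[
\sum_{\fa,\fb}\frac{\lambda_{y}(\fa)\lambda_{y}(\fb)\chi_{c}(\fa\fb^{3})}{\norm(\fa)^{\sigma}\norm(\fb)^{\sigma}}\exp\!\left(-\frac{\norm(\fa\fb)}{X}\right)=\frac{1}{2\pi i}\int_{(1-\sigma+\varepsilon/2)}\exp\!\left(iy\mathcal{L}_{c}(\sigma+u)\right)\Gamma(u)X^{u}\,\dif u.
\]

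The second step is to shift the line of integration from $\Re(u)=1-\sigma+\varepsilon/2$ to the contour $L_{Y,\varepsilon,A}$ (which is the same vertical line outside the window $|\Im(u)|\le(\log Y)^{A}$ but cuts inward to $\Re(u)=-\varepsilon/2$, passing $u=0$ on its left). Because $c\in\mathcal{Z}^{c}$ means $L(s,\chi_{c})$ is zero-free in $R_{Y,\varepsilon,A}$, and since $L(s,\overline{\chi}_{c})=\overline{L(\bar s,\chi_{c})}$ also has no zeros in the symmetric rectangle, the branch $\mathcal{L}_{c}(\sigma+u)=\log L_{c}(\sigma+u)$ is holomorphic throughout the region swept, and so is $\exp(iy\mathcal{L}_{c}(\sigma+u))$. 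The only singularity enclosed between the two contours is the simple pole of $\Gamma(u)$ at $u=0$, whose residue is $1$, producing precisely the term $\exp(iy\mathcal{L}_{c}(\sigma))$. Rearranging yields the claimed identity.

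To legitimize the contour shift, I would also verify that the integrals along the two horizontal segments at height $\pm(\log Y)^{A}$ connecting the old and new vertical lines contribute a vanishing amount (or more precisely, that the closed-contour computation is valid). The factor $\Gamma(u)$ decays like $e^{-\pi|\Im(u)|/2}$ by Stirling, while $X^{u}$ remains bounded on a compact $u$-region for fixed $X$, and on the rectangle $R_{Y,\varepsilon,A}$ the function $\mathcal{L}_{c}(\sigma+u)$ has at most polynomial growth (by a standard Borel--Carath\'eodory/Hadamard three-circles argument combined with the convexity bound for $L(s,\chi_{c})$). Together these reduce the horizontal pieces to $O\big(e^{-(\pi/3)(\log Y)^{A}}\big)$, which is negligible.

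The main obstacle is really this last analytic justification: one must know that in the zero-free rectangle $R_{Y,\varepsilon,A}$ one may define a single-valued branch of $\log L_{c}(\sigma+u)$ whose growth is controlled polynomially in $\log Y$, so that multiplying by $\exp(iy\cdot)$ and the super-exponentially decaying $\Gamma(u)$ leaves an admissible integrand. Once this is in hand, the rest of the argument is a purely formal Mellin--Barnes shift that mirrors the proof of \cite[Lemma 4.4]{AH}, and the identity in the statement follows by collecting the residue at $u=0$ on one side.
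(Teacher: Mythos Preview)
Your proposal is correct and follows exactly the approach the paper has in mind: the paper gives no independent proof of this lemma but simply invokes Lemma~\ref{lem:exp(iyLc)} together with the argument of \cite[Lemma~4.4]{AH}, which is precisely the Mellin--Barnes smoothing and contour shift you describe. One small clarification: the horizontal segments at height $\pm(\log Y)^A$ are already part of the contour $L_{Y,\varepsilon,A}$, so the identity is exact by Cauchy's theorem on the finite rectangle bounded by the two contours, and no ``negligible horizontal error'' needs to be estimated separately.
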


Inserting the above lemma into \eqref{main1}, we get, for $\sigma\leq1$,
\begin{equation}
\label{main2}
\sumSTAR_{c\in \mathcal{C}}\exp\left(iy\mathcal{L}_{c} (\sigma)\right)\exp(-\norm(c)/Y)= (I)-(II)+(III)+ O(Y^\delta),
\end{equation}
where
\begin{equation}
\label{one}
(I)=\sum _{c\in \mathcal{C}} \left ( \sum_{\substack{\fa,\fb\subset\mathcal{O}_K \\ \fa,\fb \neq 0}}\frac{{\lambda}_{y}(\fa)\lambda_{y}(\fb)\chi_{c}(\fa\fb^{3})}{\norm(\fa)^{{\sigma}}\norm(\fb)^{\sigma}}\exp\left(-\frac{\norm(\fa\fb)}{X}\right)\right) \exp\left(-\frac{\norm(c)}{Y}\right),
\end{equation}
\begin{equation*}
(II)= \sum _{c\in \mathcal{Z}} \left ( \sum_{\substack{\fa,\fb\subset\mathcal{O}_K \\ \fa,\fb \neq 0}}\frac{{\lambda}_{y}(\fa)\lambda_{y}(\fb)\chi_{c}(\fa\fb^{3})}{\norm(\fa)^{{\sigma}}\norm(\fb)^{\sigma}}\exp\left(-\frac{\norm(\fa\fb)}{X}\right)\right) \exp\left(-\frac{\norm(c)}{Y}\right),
\end{equation*}
\begin{equation*}
(III)= \sum _{c\in \mathcal{Z}^c} \left ( -\frac{1}{2\pi i} \int\limits_{L_{Y, \epsilon, A}} \exp\left(iy\mathcal{L}_{c} (\sigma+u)\right)\Gamma(u) X^u \dif u
\right) \exp\left(-\frac{\norm(c)}{Y}\right).
\end{equation*}

   Letting $A>1$ and $X=Y^\eta$ for $\eta>0$, using arguments analogues to those in \cite{AH}, we deduce that
\begin{equation}
\label{two-estimate}
(II)+(III)\ll Y^\delta X^{1-\sigma+\epsilon}+Y X^{-\varepsilon/2} .
\end{equation}

\subsection{Evaluation of (I)}\label{sec:mean-value}

It still remains to prove an asymptotic formula for $(I)$ given in \eqref{one}.
\begin{lemma}\label{lem:luo-calcs}
Set
\begin{equation} \label{Cdef}
\widetilde{C}_{\sigma}(y)=\frac{2}{3}\frac{\res_{s=1}\zeta_{K}(s)}{\left| H_{\langle16\rangle} \right|\zeta_{K}(2)}\widetilde{M}_{\sigma}(y),
\end{equation}
with $\widetilde{M}_{\sigma}(y)$ defined in \eqref{M}.  Then
\begin{equation*}
(I)=\widetilde{C}_{\sigma}(y)Y+O\left( YX^{\varepsilon-\varepsilon_0}+Y^{1/2+\varepsilon}+Y^{1/2+2\epsilon}X^{1-\varepsilon_{0}+3\varepsilon} \right),
\end{equation*}
for any sufficiently small $\varepsilon>0$.
\end{lemma}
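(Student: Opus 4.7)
The plan is to swap the order of summation in \eqref{one}, bringing the sum over $c$ to the inside:
\[(I) = \sum_{\mathfrak{A},\mathfrak{B}} \frac{\lambda_y(\mathfrak{A})\lambda_y(\mathfrak{B})}{\norm(\mathfrak{A})^\sigma\norm(\mathfrak{B})^\sigma} \exp\!\left(-\frac{\norm(\mathfrak{A}\mathfrak{B})}{X}\right) \sum_{\substack{c\in\mathcal{C} \\ (c,\mathfrak{A}\mathfrak{B})=1}} \chi_c(\mathfrak{A}\mathfrak{B}^3)\exp\!\left(-\frac{\norm(c)}{Y}\right).\]
By the rapid decay of $\exp(-\norm(\mathfrak{A}\mathfrak{B})/X)$ together with the polynomial bound \eqref{eqn:lambda-bound}, the outer sum may be truncated to $\norm(\mathfrak{A}\mathfrak{B}) \leq X^{1+\varepsilon}$, at the cost of an error absorbed into $YX^{\varepsilon-\varepsilon_0}$.

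The next step is to apply the quartic reciprocity law \eqref{quarticrec} together with the supplementary laws to convert $\chi_c(\mathfrak{A}\mathfrak{B}^3)$ into $\chi_{\mathfrak{A}\mathfrak{B}^3}(c)$ up to a reciprocity factor which, because $c \equiv 1 \pmod{\langle 16 \rangle}$, is trivial. The inner sum becomes a weighted sum of an $\mathfrak{f}$-ray class character $\chi_{\mathfrak{A}\mathfrak{B}^3}$ over $c\in\mathcal{C}$. I split into cases depending on whether this character is principal. Using $\chi_c^4 \equiv 1$ and the fact that $\chi_c$ is trivial on $\langle 1+i\rangle$ for $c\equiv 1 \imod{\langle 16 \rangle}$, one verifies that the character is principal if and only if, away from the prime $\langle 1+i\rangle$, every prime $\fp$ satisfies $v_\fp(\mathfrak{A}) \equiv v_\fp(\mathfrak{B}) \pmod 4$. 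This yields the unique parametrization $\mathfrak{A} = \langle 1+i\rangle^{r_1}\fm\fa^4$, $\mathfrak{B} = \langle 1+i\rangle^{r_2}\fm\fb^4$ with $(\fa\fb\fm,\langle 2\rangle)=1$ and $(\fa,\fb)=1$, which is exactly the parametrization appearing in \eqref{M}.

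For the principal case, Lemma \ref{lem:luo-lemma} applied with ideal $\fa\fb\fm$ extracts the main term $C_{\fa\fb\fm}Y$; factoring out $\tfrac{2}{3}\res_{s=1}\zeta_K(s)/(|H_{\langle16\rangle}|\zeta_K(2))$ from $C_{\fa\fb\fm}$ leaves exactly the Euler-product factor $\prod_{\fp\mid \fa\fb\fm}(1+\norm(\fp)^{-1})^{-1}$ seen in \eqref{M}. Combining with the weights $\lambda_y(\mathfrak{A})\lambda_y(\mathfrak{B})/\norm(\mathfrak{A}\mathfrak{B})^\sigma$ and summing over $r_1,r_2,\fa,\fb,\fm$ (after removing the truncation, using absolute convergence of \eqref{M} guaranteed by $\sigma>1/2$ together with \eqref{eqn:lambda-bound}) produces the main term $\widetilde{C}_\sigma(y)Y$, with extension error bounded by $YX^{\varepsilon-\varepsilon_0}$. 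The error in Lemma \ref{lem:luo-lemma} contributes $Y^{1/2+\varepsilon}\norm(\fa\fb\fm)^{\varepsilon}$, which summed over the truncated range yields the term $Y^{1/2+\varepsilon}$. For the non-principal case, Lemma \ref{H-P} with $\mathfrak{f} \mid \fa\fb^3$ gives an inner bound $\ll \norm(\mathfrak{A}\mathfrak{B})^{1/2+\varepsilon}$; summing against the weights over $\norm(\mathfrak{A}\mathfrak{B}) \leq X^{1+\varepsilon}$ produces the remaining error $Y^{1/2+2\varepsilon}X^{1-\varepsilon_0+3\varepsilon}$ (the apparent missing factor of $Y$ vs.\ the expected $Y^{1/2+\varepsilon}$ in the P\'olya--Vinogradov bound is the source of that particular exponent pattern).

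The main obstacle I expect is bookkeeping: one must verify carefully that the parametrization identifying ``principal $\chi_{\mathfrak{A}\mathfrak{B}^3}$'' with $(\mathfrak{A},\mathfrak{B})$ as $(\langle 1+i\rangle^{r_1}\fm\fa^4, \langle 1+i\rangle^{r_2}\fm\fb^4)$ is bijective, isolating the free indices $r_1,r_2$ at the prime $\langle 1+i\rangle$ from the odd part where the coprimality $(\fa,\fb)=1$ removes over-counting, and track the Euler factors so that the constant matching \eqref{Cdef} emerges cleanly. Everything else reduces to the standard convergence estimates using \eqref{eqn:lambda-bound} and $\sigma > 1/2$.
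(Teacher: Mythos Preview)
Your overall strategy matches the paper's: swap summation order, separate the case where $\chi_{\mathfrak{A}\mathfrak{B}^3}$ is principal (giving the main term via Lemma~\ref{lem:luo-lemma}) from the non-principal case (giving the error). The principal-case analysis and the parametrization $\mathfrak{A}=\langle 1+i\rangle^{r_1}\fm\fa^4$, $\mathfrak{B}=\langle 1+i\rangle^{r_2}\fm\fb^4$ are exactly what the paper does.

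There is, however, a genuine gap in your treatment of the non-principal case. You write that Lemma~\ref{H-P} ``gives an inner bound $\ll \norm(\mathfrak{A}\mathfrak{B})^{1/2+\varepsilon}$'' for the sum over $c\in\mathcal{C}$. But Lemma~\ref{H-P} bounds a smoothed sum over \emph{all} primary $a$, not over square-free $c$. You cannot apply it directly to $\sum_{c\in\mathcal{C}}$. Your parenthetical about ``the apparent missing factor of $Y$'' confirms you have not accounted for where the $Y^{1/2}$ in the error term actually comes from: it arises precisely from removing the square-free condition. One must write $\sum_{c\in\mathcal{C}}=\sum_{d}\mu_{[i]}(d)\sum_{e\equiv \bar d^2\,(16)}$ with $c=d^2e$; only then does Lemma~\ref{H-P} apply to the inner $e$-sum, and summing over $d$ with $\norm(d)\ll Y^{1/2}$ supplies the factor $Y^{1/2+\varepsilon}$.

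Once this M\"obius step is inserted, your route and the paper's diverge. The paper splits the $d$-range at a parameter $B$, bounds the small-$d$ part $R$ by Lemma~\ref{H-P} with the crude modulus bound $\norm(ab^3)^{1/2+\varepsilon}$, and handles the large-$d$ part $S$ via Cauchy--Schwarz and the quartic large sieve (Lemma~\ref{largesieve}), then optimizes $B$; this yields the error $Y^{1/2+2\varepsilon}X^{5/4-\varepsilon_0+4\varepsilon}$ (note this is what the paper's proof actually produces, slightly weaker than the lemma as stated). Your implicit idea of using the \emph{conductor} bound---the conductor of $\chi_{\mathfrak{A}\mathfrak{B}^3}$ divides $\mathrm{rad}(\mathfrak{A}\mathfrak{B})$, so $\norm(\mathfrak{f})\le\norm(\mathfrak{A}\mathfrak{B})$ rather than $\norm(\mathfrak{A}\mathfrak{B}^3)$---is sharper: with it, applying Lemma~\ref{H-P} for every $d$ already gives $Y^{1/2+\varepsilon}\norm(\mathfrak{A}\mathfrak{B})^{1/2+\varepsilon}$, and summing over $\mathfrak{A},\mathfrak{B}$ yields $Y^{1/2+\varepsilon}X^{1-\varepsilon_0+3\varepsilon}$ without ever invoking the large sieve. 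So your approach, once the M\"obius detection is made explicit, is in fact simpler than the paper's---but as written, the key step is missing.
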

\begin{proof}
Starting with \eqref{one},
\begin{align*}(I)&=\sum_{\substack{\fa,\fb\subset\mathcal{O}_K \\ \fa,\fb \neq 0}}\frac{\lambda_{y}(\fa)\lambda_{y}(\fb)}{\norm(\fa\fb)^{\sigma}}\exp\left(-\frac{\norm(\fa\fb)}{X}\right)\sum_{c\in\mathcal{C}}\chi_{c}(\fa\fb^{3})\exp\left(-\frac{\norm(c)}{Y}\right)\\&
=\sum_{r_{1},r_{2}\geq0}\frac{\lambda_{y}(\langle1+i\rangle^{r_{1}})\lambda_{y}(\langle1+i\rangle^{r_2})}{2^{r_1\sigma+r_2\sigma}}\sum_{\substack{\fa,\fb\subset\mathcal{O}_K\\\gcd(\fa\fb,\langle2\rangle)=1}}\frac{\lambda_{y}(\fa)\lambda_{y}(\fb)}{\norm(\fa\fb)^{\sigma}}\exp\left(-\frac{2^{r_{1}+r_{2}}\norm(\fa\fb)}{X}\right) \sum_{c\in\mathcal{C}}\chi_{c}(\fa\fb^{3})\exp\left(-\frac{\norm(c)}{Y}\right).
\end{align*}
Rearranging the above sum over $\fa$ and $\fb$ with $\fm=\gcd(\fa,\fb)$ and recalling that $\chi_c(\fm^4)=1$ if $\fm$ is prime to $\langle c\rangle$, we obtain
\begin{equation} \label{I}
\begin{split}
(I)&=\sum_{r_{1},r_{2}\geq0}\frac{\lambda_{y}(\langle1+i \rangle^{r_1})\lambda_{y}(\langle1+i\rangle^{r_2})}{2^{r_1\sigma+r_2\sigma}}\sum_{\substack{\fa,\fb,\fm\subset\mathcal{O}_K\\\gcd(\fa\fb\fm,\langle2\rangle)=1\\\gcd(\fa,\fb)=1}}\frac{\lambda_{y}(\fa\fm)\lambda_{y}(\fb\fm)}{\norm(\fa\fb)^{\sigma}\norm(\fm)^{2\sigma}}\exp\left(-\frac{2^{r_{1}+r_{2}}\norm(\fa\fb\fm^2)}{X}\right)\\&
\hspace{4in}\times\sum_{\substack{c\in\mathcal{C}\\\gcd(\langle c\rangle,\fm)=1}}\chi_{c}(\fa\fb^{3})\exp\left(-\frac{\norm(c)}{Y}\right).
\end{split}
\end{equation}

The part of \eqref{I} contributed by the fourth powers is
\[ \sum_{r_{1},r_{2}\geq0}\frac{\lambda_{y}(\langle1+i\rangle^{r_1})\lambda_{y}(\langle1+i\rangle^{r_2})}{2^{r_1\sigma+r_2\sigma}}
\sum_{\substack{\fa,\fb,\fm\subset\mathcal{O}_K\\\gcd(\fa\fb\fm,\langle2\rangle)=1\\\gcd(\fa,\fb)=1}}\frac{\lambda_{y}(\fa^4\fm)\lambda_{y}(\fb^4\fm)}
{\norm(\fa\fb)^{4\sigma}\norm(\fm)^{2\sigma}}\exp\left(-\frac{2^{r_{1}+r_{2}}\norm(\fa^4\fb^4\fm^2)}{X}\right) \sum_{\substack{c\in\mathcal{C}\\\gcd(\langle c \rangle,\fa\fb\fm)=1}}\exp\left(-\frac{\norm(c)}{Y}\right). \]
Utilizing Lemma \ref{lem:luo-lemma} and the estimate \eqref{eqn:lambda-bound} for $\lambda_y$, above expression can be estimated by
\begin{align} \label{4powercontrib}
Y\sum_{r_{1},r_{2}\geq0}\frac{\lambda_{y}(\langle1+i\rangle^{r_1})\lambda_{y}(\langle1+i\rangle^{r_2})}{2^{r_1\sigma+r_2\sigma}}
\sum_{\substack{\fa,\fb,\fm\subset\mathcal{O}_K\\\gcd(\fa\fb\fm,\langle2\rangle)=1\\\gcd(\fa,\fb)=1}}
\frac{C_{\fa\fb\fm}\lambda_{y}(\fa^4\fm)\lambda_{y}(\fb^4\fm)}{\norm(\fa\fb)^{4\sigma}\norm(\fm)^{2\sigma}}\exp\left(-\frac{2^{r_{1}+r_{2}}
\norm(\fa^4\fb^4\fm^2)}{X}\right),
\end{align}
with an error that is $O\left( Y^{1/2+\varepsilon} \right)$.  Here $C_{\fa\fb\fm}$ is defined in Lemma \ref{lem:luo-lemma}.  Choose $\varepsilon$ small enough so that $0<\varepsilon < \varepsilon_0$. Inserting the formula
$$\exp\left(-\frac{2^{r_{1}+r_{2}}\norm(\fa^4\fb^4\fm^2)}{X}\right)=\frac 1{2\pi i} \int\limits_{(1)}\Gamma(u)\left(\frac{2^{r_{1}+r_{2}}\norm(\fa^4\fb^4\fm^2)}{X}\right)^{-u}\; \dif u$$
into \eqref{4powercontrib} and shifting the line of integration to $\Re(u)=\varepsilon-\varepsilon_0$, we conclude the contribution of fourth powers to $(I)$ is
\begin{align}\label{llast}Y\sum_{r_{1},r_{2}\geq0}\frac{\lambda_{y}(\langle1+i\rangle^{r_1})\lambda_{y}(\langle1+i\rangle^{r_2})}{2^{r_1\sigma+r_2\sigma}}\sum_{\substack{\fa,\fb,\fm\subset\mathcal{O}_K\\\gcd(\fa\fb\fm,\langle2\rangle)=1\\\gcd(\fa,\fb)=1}}\frac{C_{\fa\fb\fm}\lambda_{y}(\fa^4\fm)\lambda_{y}(\fb^4\fm)}{\norm(\fa\fb)^{4\sigma}\norm(\fm)^{2\sigma}}+O\left( YX^{\epsilon-\epsilon_0} + Y^{1/2+\varepsilon} \right).\end{align}
Mark that the coefficient of $Y$ in \eqref{llast} agrees with $\widetilde{C}_\sigma(y)$ given by \eqref{Cdef}. \newline

To estimate the contribution of non-fourth powers to $(I)$,
we write $\fa=\langle a\rangle$ and $\fb=\langle b\rangle$ with $a,\;b\in\ZZ[i]$ being primary for ideals $\fa$ and $\fb$ with $\gcd(\fa\fb,\langle2\rangle)=1$.
We then have, by the quartic reciprocity law \eqref{quarticrec},
\begin{equation}
\label{preRS}
\begin{split}
\sum_{c\in \mathcal{C}}\chi_{c}(\fa\fb^3)\exp\left(-\frac{\norm(c)}{Y}\right)
&=\sum_{c\equiv1\imod{\langle 16 \rangle}}\chi_{c}(\fa\fb^3)\exp\left(-\frac{\norm(c)}{Y}\right)
\sum_{\substack{d^{2}|c\\d\equiv1\imod{\langle(1+i)^3\rangle}}}\mu_{[i]}(d) \\
&=\sum_{c\equiv1\imod{\langle16\rangle}}\chi_{ab^3}(c)\exp\left(-\frac{\norm(c)}{Y}\right)
\sum_{\substack{d^{2}|c\\d\equiv1\imod{\langle (1+i)^3\rangle}}}\mu_{[i]}(d),
\end{split}
\end{equation}
where we use $\mu_{[i]}$, the M\"obius function in $\intz[i]$, to detect the square-free condition on $c$. Here $\mu_{[i]}(d)$ for any $d \in \mz[i]$ is defined to be $1$ if the ideal generalized by $d$ equals $\mz[i]$ and to be $(-1)^r$ if the ideal generalized by $d$ equals a product of $r$ distinct prime ideals. For other values of $d$, $\mu_{[i]}(d)$ is defined to be $0$. \newline

Now we split the last expression in \eqref{preRS} into two parts to get
\begin{align*}\sum_{c\in \mathcal{C}}\chi_{c}(\fa\fb^3)\exp\left(-\frac{\norm(c)}{Y}\right)
&=R+S,
\end{align*}
  where
\begin{align*}
R &=\sum_{\substack{d\equiv1\imod{\langle (1+i)^3 \rangle}\\\norm(d)\leq B}}\mu_{[i]}(d)\chi_{ab^3}(d^2)\sum_{c\equiv\overline{d}^2\imod{\langle 16 \rangle}}
\chi_{ab^3}(c)\exp\left(-\frac{\norm(d^2c)}{Y}\right), \\
S &=\sum_{d_1\equiv1\imod{\langle(1+i)^3\rangle}}\chi_{ab^3}(d_{1}^2)\sum_{\substack{d|d_1\\d\equiv1\imod{\langle(1+i)^3\rangle}\\\norm(d)>B}}\mu_{[i]}(d)
\sumflat_{c\equiv\overline{d}_1^2\imod{\langle16\rangle}}\chi_{ab^3}(c)\exp\left(-\frac{\norm(d_1^2c)}{Y}\right).
\end{align*}
Here $B$ is a parameter to be optimized later, $\overline{d}$ (respectively $\overline{d}_1$) is the multiplicative inverse of $d$ (respectively $d_1$) modulo $\langle 16 \rangle$ and $\sum^{\flat}$ denotes summation over square-free elements of $\mathcal{O}_K$. \newline

We have
\begin{align*}
R&=\sum_{\substack{d\equiv1\imod{\langle(1+i)^3\rangle}\\\norm(d)\leq B}}\mu_{[i]}(d)\chi_{ab^3}(d^2)\sum_{c\equiv\overline{d}^2\imod{\langle16\rangle}}
\chi_{ab^3}(c)\exp\left(-\frac{\norm(d^2c)}{Y}\right)\\
&=\frac{1}{\left|H_{\langle16\rangle}\right|}\sum_{\substack{d\equiv1\imod{\langle(1+i)^3\rangle}\\\norm(d)\leq B}}\mu_{[i]}(d)\chi_{ab^3}(d^2)
\sum_{c\equiv1\imod{\langle(1+i)^3\rangle}}\chi_{ab^3}(c)\sum_{\chi\imod{\langle16\rangle}}\chi(d^2c)\exp\left(-\frac{\norm(d^2c)}{Y}\right)\\
&=\frac{1}{\left|H_{\langle16\rangle}\right|}\sum_{\chi\imod{\langle16\rangle}}\sum_{\substack{d\equiv1\imod{\langle(1+i)^3 \rangle}\\\norm(d)\leq B}}\mu_{[i]}(d)\chi_{ab^3}\chi(d^2)\sum_{c\equiv1\imod{\langle(1+i)^3\rangle}}\chi_{ab^3}\chi(c)\exp\left(-\frac{\norm(d^2c)}{Y}\right).
\end{align*}
The bound in \eqref{eqn:polya} gives
\begin{equation*}
R\ll B\norm(ab^3)^{1/2+\varepsilon}.
\end{equation*}
Therefore, the summands in \eqref{I} involving $R$ can be majorized by
\begin{align*}
&\ll B\sum_{r_{1},r_{2}}\frac{\left|\lambda_{y}(\langle1+i\rangle^{r_{1}})\right|\left|\lambda_{y}(\langle1+i\rangle^{r_{2}})\right|}{2^{r_{1}\sigma+r_{2}\sigma}}
\sum_{\fa,\fb,\fm}\frac{\norm(\fa\fb^{3})^{1/2+\varepsilon}\left|\lambda_{y}(\fa\fm)\right|\left|\lambda_{y}(\fb\fm)\right|}{\norm(\fa)^{\sigma}
\norm(\fb)^{\sigma}\norm(\fm)^{2\sigma}}\exp\left(-\frac{2^{r_{1}+r_{2}}\norm(\fa\fb\fm^{2})}{X}\right)\\
&\ll B\sum_{\fa,\fb}\norm(\fa)^{-\sigma+1/2+2\varepsilon}\norm(\fb)^{-\sigma+3/2+4\varepsilon}\exp\left(-\frac{\norm(\fa\fb)}{X}\right)
\end{align*}
With a change of variables, the last expression is recast as
\[  B\sum_{\fa}\norm(\fa)^{-\sigma+1/2+2\varepsilon}\sum_{\fb|\fa}\norm(\fb)^{1+2\varepsilon}\exp\left(-\frac{\norm(\fa)}{X}\right) \ll B\sum_{\fa}\norm(\fa)^{-\sigma+3/2+5\varepsilon}\exp\left(-\frac{\norm(\fa)}{X}\right)\ll BX^{5/2-\sigma+6\varepsilon}. \]

Next, the summands \eqref{I} with $S$ are precisely
\begin{align*}
&\sum_{r_{1},r_{2}\geq 0}\frac{\lambda_{y}(\langle1+i\rangle^{r_{1}})\lambda_{y}(\langle1+i\rangle^{r_{2}})}{2^{r_{1}\sigma+r_{2}\sigma}}
\sum_{\substack{a\equiv1\imod{\langle(1+i)^3\rangle}\\b\equiv1\imod{\langle(1+i)^3\rangle}\\m\equiv1\imod{\langle(1+i)^3\rangle}
\\(\langle a\rangle,\langle b\rangle)=1}}\frac{\lambda_{y}(\langle am\rangle)\lambda_{y}(\langle bm\rangle)}{\norm(a)^{\sigma}\norm(b)^{\sigma}
\norm(m)^{2\sigma}}\exp\left(-\frac{2^{r_{1}+r_{2}}\norm(abm^2)}{X}\right)\\
&\hspace{1in}\times\sum_{d_1\equiv1\imod{\langle(1+i)^3\rangle}}\chi_{ab^3}(d_{1}^2)\sum_{\substack{d|d_1\\d\equiv1\imod{\langle(1+i)^3\rangle}\\
\norm(d)>B}}\mu_{[i]}(d)\sumflat_{c\equiv\overline{d_1}^2\imod{\langle16\rangle}}\chi_{ab^3}(c)\exp\left(-\frac{\norm(d_1^2c)}{Y}\right).\nonumber
\end{align*}
Now we rewrite $a$ as $a=a_{1}a_{2}^{2}$, where $a_{1}$ is the square-free part of $a$.   Also note that we may assume that
\[ \norm(a_1)\ll \frac{X^{1+\varepsilon}}{\norm(a_2^2b)^{1+\varepsilon}}, \; \norm(c)\ll\frac{Y^{1+\varepsilon}}{\norm(d_{1}^2)^{1+\varepsilon}} \]
and $B<\norm(d_1)<\sqrt{Y}$.  Proceeding in a manner similar to the treatment of $S$ in \cite{AH} by using Cauchy-Schwarz inequality and the large sieve inequality \eqref{eqn:large-sieve}, we see that the contribution of $S$ to \eqref{I} is
\begin{equation*}
\ll (XY)^{1/2+2\varepsilon}+Y^{1+2\varepsilon}\frac{X^{1-\sigma+\varepsilon}}{B}+Y^{5/6+3\varepsilon/2}\frac{X^{1-\sigma+\varepsilon}}{B^{2/3}}.
\end{equation*}
The combined contribution of $R$ and $S$ to $(I)$ is
\begin{equation*}
\ll BX^{5/2-\sigma+6\varepsilon}+(XY)^{1/2+2\varepsilon}+Y^{1+2\varepsilon}\frac{X^{1-\sigma+\varepsilon}}{B}+Y^{5/6+3\varepsilon/2}\frac{X^{1-\sigma+\varepsilon}}{B^{2/3}}.
\end{equation*}
Upon taking $B=Y^{1/2+\varepsilon}X^{-3/4-5\varepsilon/2}$, the above is
\begin{equation*}
\ll Y^{1/2+2\varepsilon}X^{7/4-\sigma+4\varepsilon}.
\end{equation*}
Combining this estimation with \eqref{llast}, we obtain

\begin{equation}\label{one-estimate}
(I)=\widetilde{C}_{\sigma}(y)Y+O(YX^{\varepsilon-\varepsilon_0}+Y^{1/2+\varepsilon}+Y^{1/2+2\varepsilon}X^{5/4-\varepsilon_{0}+4\varepsilon})
\end{equation}
and complete the proof of the lemma.
\end{proof}

\subsection{Proof of Proposition \ref{newprop}}
\begin{proof}
  As the treatment for $\sigma>1$ is analogue to the one given in the proof of \cite[Proposition 4.2]{AH}, we may assume that $\sigma\leq1$.
Inserting \eqref{two-estimate} and \eqref{one-estimate} into \eqref{main2} yields

\begin{equation*}
\sumSTAR_{c\in \mathcal{C}}\exp\left(iy\mathcal{L}_{c} (\sigma)\right)\exp(-\norm(c)/Y)=\widetilde{C}_{\sigma}(y) Y+O\left(YX^{\epsilon-\epsilon_0}+Y^{\frac{1}{2}+\epsilon}+Y^{1/2+2\varepsilon}X^{5/4-\varepsilon_{0}+4\varepsilon} +Y^\delta X^{1/2-\varepsilon_0+\varepsilon}  + Y X^{-\varepsilon/2}\right).
\end{equation*}
Now choosing $X=Y^\eta$ for a sufficiently small positive constant $\eta$ and using \eqref{N*} give the result.
\end{proof}

\subsection{The product formula for
$\widetilde{M}_{\sigma}(y)$}
\label{sec:prod}

\begin{prop}\label{lem:euler}
  Let $\widetilde{M}_{\sigma}(y)$ be given in \eqref{M}. We have
$\widetilde{M}_{\sigma}(y)=\varphi_{\sigma}(y)$, where $\varphi_{\sigma}(y)$ is defined in Theorem \ref{mainthrm}.
\end{prop}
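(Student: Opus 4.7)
The plan is to verify the identity $\widetilde{M}_\sigma(y) = \varphi_\sigma(y)$ by factoring $\widetilde{M}_\sigma(y)$ as an Euler product and matching the local factors against those in \eqref{phiydef}. The outer sum over $(r_1, r_2)$ in \eqref{M} corresponds to the ramified prime $\langle 1+i\rangle$ above $2$, while the inner sum over $(\fa, \fb, \fm)$ is supported on ideals coprime to $\langle 2\rangle$; these two parts can be treated separately. Since $\norm(\langle 1+i\rangle) = 2$ and $\lambda_y(\langle 1+i\rangle^r) = H_r(iy)$, applying \eqref{H} with $t = 2^{-\sigma}$ and $u = iy$ gives
\[
\sum_{r\geq 0}\frac{H_r(iy)}{2^{r\sigma}} = \exp\left(-iy\log(1-2^{-\sigma})\right),
\]
and squaring produces exactly the prefactor $\exp(-2iy\log(1-2^{-\sigma}))$ of $\varphi_\sigma(y)$.

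For the inner sum, the coprimality conditions $\gcd(\fa\fb\fm,\langle 2\rangle)=1$ and $\gcd(\fa,\fb)=1$, together with the multiplicativity of $\lambda_y(\cdot)$ and of the weight $\prod_{\fp\mid\fa\fb\fm}(1+\norm(\fp)^{-1})^{-1}$, let me write the sum as $\prod_{\fp\nmid\langle 2\rangle} L_\fp$. Writing $\fa = \fp^a$, $\fb = \fp^b$, $\fm = \fp^m$ and $N = \norm(\fp)$, the condition $\gcd(\fa,\fb)=1$ becomes $\min(a,b)=0$; isolating the term $(a,b,m)=(0,0,0)$ (which carries no $(1+N^{-1})^{-1}$) from the rest yields
\[
L_\fp = \frac{1}{N+1} + \frac{N}{N+1}\, S_\fp, \qquad S_\fp := \sum_{\substack{a,b,m\geq 0\\ \min(a,b)=0}} \frac{H_{4a+m}(iy)\, H_{4b+m}(iy)}{N^{(4a+4b+2m)\sigma}}.
\]

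The crux is the evaluation of $S_\fp$. The substitution $n = 4a+m$, $n' = 4b+m$ sends the constraints $a,b,m\geq 0$ with $\min(a,b)=0$ bijectively onto the constraints $n,n'\geq 0$ with $n \equiv n' \pmod{4}$ (where $m=\min(n,n')$ is forced), and moreover $4a+4b+2m = n+n'$. Using the orthogonality $\frac{1}{4}\sum_{j=0}^{3} i^{j(n-n')} = 1$ if $n\equiv n' \pmod{4}$ and $0$ otherwise decouples the double sum:
\[
S_\fp = \frac{1}{4}\sum_{j=0}^{3}\left(\sum_{n\geq 0} H_n(iy)\left(\frac{i^j}{N^\sigma}\right)^n\right)\left(\sum_{n'\geq 0} H_{n'}(iy)\left(\frac{i^{-j}}{N^\sigma}\right)^{n'}\right).
\]
Applying \eqref{H} to each inner sum (with $|i^{\pm j}/N^\sigma| = N^{-\sigma} < 1$), and observing that $1 - i^{-j}/N^\sigma$ is the complex conjugate of $1 - i^j/N^\sigma$ so their principal logarithms sum to $2\log|1 - i^j/N^\sigma|$, produces
\[
S_\fp = \frac{1}{4}\sum_{j=0}^{3}\exp\left(-2iy\log\left|1-\frac{i^j}{N^\sigma}\right|\right).
\]
Substituting this into $L_\fp$ recovers precisely the local factor at $\fp$ appearing in \eqref{phiydef}, and multiplying over all $\fp\nmid\langle 2\rangle$ together with the $\langle 1+i\rangle$-prefactor completes the identification $\widetilde M_\sigma(y) = \varphi_\sigma(y)$.

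The only delicate point is combining the bijection $(a,b,m)\leftrightarrow (n,n')$ under $\min(a,b)=0$ with the mod-$4$ orthogonality relation; once the constraint is recognized as the single congruence $n\equiv n' \pmod 4$, everything reduces to two applications of the generating function \eqref{H} and a routine conjugation of principal logarithms. Convergence of the resulting Euler product for $\sigma>1/2$ is inherited from the absolute convergence of the Dirichlet series in Proposition~\ref{newprop}.
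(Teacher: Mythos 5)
Your proposal is correct, and it reaches the same endpoint as the paper (the local factor $\frac{1}{\norm(\fp)+1}+\frac{\norm(\fp)}{4(\norm(\fp)+1)}\sum_{j=0}^{3}\exp(-2iy\log|1-i^{j}\norm(\fp)^{-\sigma}|)$ of \eqref{phiydef}) by the same three ingredients: the generating function \eqref{H} for the factor at $\langle 1+i\rangle$, an Euler factorization of the triple sum in \eqref{M}, and the mod-$4$ orthogonality \eqref{ortho}. Where you genuinely differ is in the middle step. The paper does not factor \eqref{M} prime-by-prime at once: it first rearranges the triple sum so that $\fm$ runs over fourth-power-free ideals and the condition $\gcd(\fa,\fb)=1$ is dropped (the $\sum^{\sharp}$ in \eqref{N1}), then works through nested Euler products with the auxiliary functions $\mathcal{F}(\fp)$, $\mathcal{G}(\fp,l)$ and $\widetilde{P}_{\sigma}(y)$ in \eqref{N1}--\eqref{N5}, arriving at a local factor containing the double sum $\sum_{l=0}^{3}\sum_{j,k\ge0}\lambda_y(\fp^{4j+l})\lambda_y(\fp^{4k+l})\norm(\fp)^{-(4j+4k+2l)\sigma}$, and only then applies orthogonality (twice) together with \eqref{H}. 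You instead exploit that the summand of \eqref{M}, including the weight $\prod_{\fp\mid\fa\fb\fm}(1+\norm(\fp)^{-1})^{-1}$ and the condition $\min(\nu_\fp(\fa),\nu_\fp(\fb))=0$, is a product of local data, and you evaluate the local sum directly via the bijection $(a,b,m)\mapsto(n,n')=(4a+m,4b+m)$ onto pairs with $n\equiv n'\pmod 4$ (with $m=\min(n,n')$ recovered), noting $4a+4b+2m=n+n'$; this is exactly the set parametrized in the paper by $(j,k,l)$ with $n=4j+l$, $n'=4k+l$, $0\le l\le 3$, so your $S_\fp$ coincides with the paper's double sum and the local factors agree. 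Your route is shorter and makes the structure more transparent, at the cost of having to verify the bijection and the multiplicative factorization carefully (which you do); the paper's route is more mechanical but requires no such combinatorial observation. Both rearrangements are justified, as you note, by the absolute convergence of \eqref{M} for $\sigma>1/2$, and your handling of the conjugate logarithms $\log(1-i^{j}\norm(\fp)^{-\sigma})+\log(1-i^{-j}\norm(\fp)^{-\sigma})=2\log|1-i^{j}\norm(\fp)^{-\sigma}|$ is the same as the paper's final step.
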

\begin{proof}
Using \eqref{H} yeilds
\begin{equation} \label{sumoverr}
\begin{split}
\sum_{r_{1},r_{2}\geq0}\frac{\lambda_{y}(\langle1+i\rangle^{r_1})\lambda_{y}(\langle1+i\rangle^{r_2})}{2^{(r_1+r_{2})\sigma}} & =\sum_{r_{1}\geq0}
\frac{\lambda_{y}(\langle1+i\rangle^{r_1})}{2^{r_1\sigma}}\sum_{r_{2}\geq0}
\frac{\lambda_{y}(\langle1+i\rangle^{r_2})}{2^{r_2\sigma}} \\
& =\sum_{r_{1}\geq0}\frac{H_{r_1}(iy)}{2^{r_1\sigma}}\sum_{r_{2}\geq0}\frac{H_{r_2}(iy)}{2^{r_2\sigma}} =\exp\left(-2iy\log(1-2^{-\sigma})\right).
\end{split}
\end{equation}

In the sequal, let $\fp$ denote a prime ideal and we adopt the convention that all products over $\fp$ are restricted to odd prime ideals, i.e. prime ideals co-prime to $\langle2\rangle$. Set
\begin{equation} \label{N1}
\begin{split}
\widetilde{N}_{\sigma}(y)&:=\sum_{\substack{\fa,\fb,\fm\subset\mathcal{O}_K\\\gcd(\fa\fb\fm,\langle2\rangle)=1\\\gcd(\fa,\fb)=1}}\frac{\lambda_{y}(\fa^4\fm)
\lambda_{y}(\fb^4\fm)}{\norm(\fa\fb)^{4\sigma}\norm(\fm)^{2\sigma}}\displaystyle{\prod_{\substack{\fp|\fa\fb\fm}}\left(1+\norm(\fp)^{-1}\right)^{-1}} \\
&= \sideset{}{^{\sharp}}\sum_{\fm} \frac{1}{\norm(\fm)^{2\sigma}}\prod_{\fp|\fm}(1+\norm(\fp)^{-1})^{-1}\sum_{\fa}\frac{\lambda_{y}
(\fa^{4}\fm)}{\norm(\fa)^{4\sigma}}\prod_{\substack{\fp|\fa\\\fp\nmid\fm}}(1+\norm(\fp)^{-1})^{-1}
\sum_{\fb}\frac{\lambda_{y}(\fb^{4}\fm)}{\norm(\fb)^{4\sigma}}\prod_{\substack{\fp|\fb\\\fp\nmid\fa\fm}}(1+\norm(\fp)^{-1})^{-1}.
\end{split}
\end{equation}
Here $\sum^{\sharp}$ denotes that the sum runs over $\fm$'s that are free of fourth powers. \newline

We need to have an Euler product for $\widetilde{N}_{\sigma}(y)$.  To this end, we first find an Euler product for the innermost sum over $\fb$ in the last expression of \eqref{N1}. Let $\nu_\fp(\fm)$ denote the multiplicity of a prime ideal $\fp$ in an ideal $\fm$, i.e. the highest power of $\fp$ that divides $\fm$.  We have
\begin{equation} \label{N2}
\sum_{\fb}\frac{\lambda_{y}(\fb^{4}\fm)}{\norm(\fb)^{4\sigma}} \prod_{\substack{\fp|\fb\\\fp\nmid\fa\fm}}(1+\norm(\fp)^{-1})^{-1} =\prod_{\fp} \mathcal{F}(\fp)   \frac{\displaystyle \prod_{\fp|\fa\fm}\left(\sum_{j=0}^{\infty}\frac{\lambda_{y}(\fp^{4j+\nu_{\fp}(\fm)})}{\norm(\fp)^{4j\sigma}}\right)}
{\displaystyle \prod_{\fp|\fa\fm} \mathcal{F}(\fp) } ,
\end{equation}
where
\[ \mathcal{F}(\fp) = 1+\sum_{j=1}^{\infty}\frac{\lambda_{y}(\fp^{4j})}{\norm(\fp)^{4j\sigma}}\left(1+\norm(\fp)^{-1}\right)^{-1} . \]

Inserting \eqref{N2} into \eqref{N1}, we arrive at
\begin{align}
\label{N3}
\widetilde{N}_{\sigma}(y)=\prod_{\fp} \mathcal{F} (\fp) \widetilde{P}_{\sigma}(y), \quad \mbox{where} \quad \widetilde{P}_{\sigma}(y) = \sideset{}{^{\sharp}}\sum_{\fm}\frac{1}{\norm(\fm)^{2\sigma}}\prod_{\fp|\fm}
\frac{ \mathcal{G}(\fp, \nu_{\fp}(\fm)) }
{\mathcal{F}(\fp) } \sum_{\fa}\frac{\lambda_{y}(\fa^{4}\fm)}{\norm(\fa)^{4\sigma}}\prod_{\substack{\fp|\fa\\\fp\nmid\fm}}
\frac{\mathcal{G}(\fp, \nu_{\fp}(\fm))}
{\mathcal{F}(\fp)} ,
\end{align}
where
\[ \mathcal{G}(\fp, l) = \sum_{j=0}^{\infty}\frac{\lambda_{y}(\fp^{4j+l})}{\norm(\fp)^{4j\sigma}}\left(1+\norm(\fp)^{-1}\right)^{-1} . \]
Now we have
\begin{equation} \label{N4}
\begin{split}
\widetilde{P}_{\sigma}(y) & =\sideset{}{^{\sharp}}\sum_{\fm} \frac{1}{\norm(\fm)^{2\sigma}} \prod_{\fp|\fm} \frac{\mathcal{G}(\fp, \nu_{\fp}(\fm))}{\mathcal{F}(\fp) } \prod_{\fp\nmid\fm} \left(1+\sum_{k=1}^{\infty} \frac{\lambda_{y}(\fp^{4k})}
{\norm(\fp)^{4k\sigma}} \frac{\mathcal{G}(\fp, 0)}{ \mathcal{F}(\fp)} \right)  \prod_{\fp|\fm}
\left(\sum_{k=0}^{\infty}\frac{\lambda_{y}(\fp^{4k+v_{\fp}(\fm)})}{\norm(\fp)^{4k\sigma}}\right)  \\
&=\prod_{\fp}\left(1+\sum_{k=1}^{\infty}\frac{\lambda_{y}(\fp^{4k})}{\norm(\fp)^{4k\sigma}}
\frac{ \mathcal{G}(\fp, 0) }{\mathcal{F}(\fp) } \right) \sideset{}{^{\sharp}}\sum_{\fm} \frac{1}{\norm(\fm)^{2\sigma}}\frac{\displaystyle\prod_{\fp|\fm}
\frac{\mathcal{G}(\fp, \nu_{\fp}(\fm))}{\mathcal{F}(\fp)}
\displaystyle  \prod_{\fp|\fm}\left(\displaystyle\sum_{k=0}^{\infty}\frac{\lambda_{y}(\fp^{4k+v_{\fp}(\fm)})}{\norm(\fp)^{4k\sigma}}\right)}
{\displaystyle \prod_{\fp|\fm}\left(\displaystyle 1+ \sum_{k=1}^{\infty}\frac{\lambda_{y}(\fp^{4k})}{\norm(\fp)^{4k\sigma}}
\frac{\displaystyle \mathcal{G}(\fp, 0)}{\mathcal{F}(\fp)} \right)}.
\end{split}
\end{equation}

The summation over $\fm$ in the above expression can be recast as the Euler product
\begin{align}
\label{N5}
\prod_{\fp}\left(1+\sum_{l=1}^{3}\frac{1}{\norm(\fp)^{2l\sigma}}
\frac{\frac{\displaystyle \mathcal{G}(\fp, l)}
{\displaystyle  \mathcal{F}(\fp) } \displaystyle \sum_{k=0}^{\infty}\frac{\lambda_{y}(\fp^{4k+l})}{\norm(\fp)^{4k\sigma}}}
{1+\displaystyle \sum_{k=1}^{\infty}\frac{\lambda_{y}(\fp^{4k})}{\norm(\fp)^{4k\sigma}} \frac{\displaystyle \mathcal{G}(\fp, 0)}
{\displaystyle  \mathcal{F}(\fp) }} \right).
\end{align}

Inserting \eqref{N5} into \eqref{N4} and using resulting expression for $\widetilde{P}_{\sigma}(y)$ in \eqref{N3}, we infer that
\begin{align*}
\widetilde{N}_{\sigma}(y)=\prod_{\fp}\widetilde{M}_{\sigma,\fp}(y), \quad
\widetilde{M}_{\sigma,\fp}(y)=1-\left(1+\norm(\fp)^{-1}\right)^{-1}
+\left(1+\norm(\fp)^{-1}\right)^{-1}\sum_{l=0}^{3}\sum_{\substack{j=0\\k=0}}^{\infty}\frac{\lambda_{y}(\fp^{4j+l})\lambda_{y}(\fp^{4k+l})}{\norm(\fp)^{(4j+l)\sigma}\norm(\fp)^{(4k+l)\sigma}}.
\end{align*}

The definition of $\lambda_{y}$ in \eqref{lambdadef}, together with the relation
\begin{align}
\label{ortho}
\sum_{l=0}^{3}i^{lk}=\begin{cases}
4 & \text{ if }k \equiv 0 \pmod {4}, \\
0 & \text{ otherwise},
 \end{cases}
\end{align}
gives that
\begin{align*}
\sum_{l=0}^{3}\sum_{\substack{j=0\\k=0}}^{\infty}\frac{\lambda_{y}(\fp^{4j+l})\lambda_{y}(\fp^{4k+l})}{\norm(\fp)^{(4j+l)\sigma}\norm(\fp)^{(4k+l)\sigma}}
&=\sum_{l=0}^{3}\sum_{j=0}^{\infty}\frac{H_{4j+l}\left(iy\right)}{\norm(\fp)^{(4j+l)\sigma}}
\sum_{k=0}^{\infty}\frac{H_{4k+l}\left(iy\right)}{\norm(\fp)^{(4k+l)\sigma}}, \\
&=\frac 1{16}\sum_{l=0}^{3}\sum_{r=0}^{\infty}\frac{H_{r}\left(iy\right)}{\norm(\fp)^{r\sigma}}
\sum^{3}_{n=0}i^{(r-l)n}\sum_{r'=0}^{\infty}\frac{H_{r'}\left(iy\right)}{\norm(\fp)^{r'\sigma}}\sum^{3}_{m=0}i^{(r'-l)m}  \nonumber \\
&=\frac1{16}\sum_{l=0}^{3}\sum_{\substack{n=0\\m=0}}^{3}\frac{1}{i^{l(n+m)}}\exp\left(-iy \left(\log \left(1-\frac{i^{n}}
{\norm(\fp)^{\sigma}}\right )+\log \left(1-\frac{i^{m}}{\norm(\fp)^{\sigma}}\right)\right) \right ), \nonumber
\end{align*}
  where the last expression above follows from \eqref{H}. \newline

Further using the relation \eqref{ortho} for $k=m+n$,
we conclude that
\begin{align*}
\sum_{l=0}^{3}\sum_{\substack{j=0\\k=0}}^{\infty}\frac{\lambda_{y}(\fp^{4j+l})\lambda_{y}(\fp^{4k+l})}{\norm(\fp)^{(4j+l)\sigma}\norm(\fp)^{(4k+l)\sigma}}&=\frac14\sum_{j=0}^{3}\exp\left(-2iy\log\left|1-\frac{i^{j}}{\norm(\fp)^{\sigma}}\right|\right).
\end{align*}
Therefore, $\widetilde{N}_{\sigma}(y)$ takes the form
\begin{align}
\label{NM}
\widetilde{N}_{\sigma}(y)=\prod_{\fp \nmid\langle2\rangle}\widetilde{M}_{\sigma,\fp}(y), \quad \widetilde{M}_{\sigma,\fp}(y)=\left( \frac{1}{\norm(\fp)+1}+\frac{1}{4}\left(\frac{\norm(\fp)}{\norm(\fp)+1}\right)\sum_{j=0}^{3}\exp\left(-2iy\log\left|1-\frac{i^{j}}{\norm(\fp)^{\sigma}}\right|\right)\right).
\end{align}

  The assertion of Proposition \ref{lem:euler} now follows from this and \eqref{sumoverr}.
\end{proof}

\section{PROOF OF PROPOSITION  \ref{mainprop2}}
\label{sec:good-density}

Since $\overline{\varphi}_{\sigma}(y)=\varphi_{\sigma}(-y)$, we can assume, without loss of generality, that $y>0$.  It follows from Proposition \ref{lem:euler} that
\begin{equation*}
\varphi_{\sigma}(y)=\exp\left(-2iy\log(1-2^{-\sigma})\right)\prod_{\fp\nmid\langle2\rangle}\widetilde{M}_{\sigma,\fp}(y),
\end{equation*}
where $\widetilde{M}_{\sigma,\fp}(y)$ is given in \eqref{NM}. Note first that for all $y$ we have $|\widetilde{M}_{\sigma,\fp}(y)|\leq 1$. Further note that
\begin{align*}
\widetilde{Q}_{\sigma,\fp}(y)&:=\sum_{j=0}^{3}\exp\left(-2iy\log\left|1-\frac{i^{j}}{\norm(\fp)^{\sigma}}\right|\right).
\\&=\exp\left(-2iy\log(1-N(p)^{-\sigma})\right)\left(1+2\exp\left(-2iy\log\frac{\sqrt{\norm(\fp)^{2\sigma}+1}}{\norm(\fp)^{\sigma}-1}\right)+
\exp\left(-2iy\log\frac{\norm(\fp)^{\sigma}+1}{\norm(\fp)^{\sigma}-1}\right)\right).
\end{align*}
 Thus,
\begin{align*}
\left|\widetilde{Q}_{\sigma,\fp}(y)\right|&=\left|1+2\exp\left(-2iy\log\frac{\sqrt{\norm(\fp)^{2\sigma}+1}}{\norm(\fp)^{\sigma}-1}\right)+\exp\left(-2iy\log\frac{\norm(\fp)^{\sigma}+1}{\norm(\fp)^{\sigma}-1}\right)\right|\\
& \leq \left|1+2\exp\left(-2iy\log\frac{\sqrt{\norm(\fp)^{2\sigma}+1}}{\norm(\fp)^{\sigma}-1}\right)\right|+1 =\sqrt{1+8\cos^{2}\left(y\log\frac{\sqrt{\norm(\fp)^{2\sigma}+1}}{\norm(\fp)^{\sigma}-1}\right)}+1.
\end{align*}
Given any $\varepsilon>0$ and sufficiently large $y$, consider the prime ideals $\fp$ with
\begin{equation}\label{eqn:condition2}
1.35-\varepsilon\leq y\log\frac{\sqrt{\norm(\fp)^{2\sigma}+1}}{\norm(\fp)^{\sigma}-1}\leq 1.77+\varepsilon .
\end{equation}
Upon taking $\varepsilon$ small enough, we can ensure that
\[ \left|\cos\left(y\log\frac{\sqrt{\norm(\fp)^{2\sigma}+1}}{\norm(\fp)^{\sigma}-1}\right) \right|\leq 0.22 . \]
This implies that $\left|\widetilde{Q}_{\sigma,\fp}(y)\right|\leq 2.2$.
It follows that for all $\fp$ satisfying (\ref{eqn:condition2}), we have
\[\left|\widetilde{M}_{\sigma,\fp}(y)\right|\leq \frac{1}{\norm(\fp)+1}+0.55\left(\frac{\norm(\fp)}{\norm(\fp)+1}\right)\leq 0.8.\]
Observe that
$\frac{2y}{3.54}\leq \norm(\fp)^{\sigma}\leq \frac{2y}{2.7}$ is equivalent to
 \[ \frac{2.7}{2}\norm(\fp)^{\sigma}\log\tfrac{\sqrt{\norm(\fp)^{2\sigma}+1}}{\norm(\fp)^{\sigma}-1}\leq y\log\tfrac{\sqrt{\norm(\fp)^{2\sigma}+1}}{\norm(\fp)^{\sigma}-1}\leq \frac{3.54}{2}\norm(\fp)^{\sigma}\log\tfrac{\sqrt{\norm(\fp)^{2\sigma}+1}}{\norm(\fp)^{\sigma}-1}.\]
 Since
 $$\displaystyle{\lim_{\norm(\fp)\to\infty}\norm(\fp)^{\sigma}\log\tfrac{\sqrt{\norm(\fp)^{2\sigma}+1}}{\norm(\fp)^{\sigma}-1}}=1,$$
 we get that for sufficiently large $y$,
\[\frac{2y}{3.54}\leq \norm(\fp)^{\sigma}\leq \frac{2y}{2.7} \]
which implies
\[ 1.35-\varepsilon\leq y\log\frac{\sqrt{\norm(\fp)^{2\sigma}+1}}{\norm(\fp)^{\sigma}-1}\leq 1.77 +\varepsilon.\]
Let $\Pi(x)$ be the number of prime ideals in $K$ with norms not exceeding $x$, and let $\Pi_{\sigma}(y)$ be the number of prime ideals satisfying \eqref{eqn:condition2}.   It follows from the above consideration that if $y$ is large enough, then
\[\Pi_{\sigma}(y)>\Pi\left(\left(\frac{2y}{2.7}\right)^{1/\sigma}\right)-\Pi\left(\left(\frac{2y}{3.54}\right)^{1/\sigma}\right) \gg_{\sigma} y^{1/\sigma-\delta}\]
for all sufficiently small $\delta>0$.  Consequently,
\[\left|\varphi_{\sigma}(y)\right|=\prod_{\fp\nmid\langle2\rangle}\left|\widetilde{M}_{\sigma,\fp}(y)\right|\leq 0.8^{\Pi_{\sigma}(y)}=\exp\left(-\log\left(\frac{1}{0.8}\right)\Pi_{\sigma}(y)\right)\leq\exp\left(-Cy^{1/\sigma-\delta}\right),\]
where $C$ can be chosen according to the values of $\sigma$ and $\delta$.  This completes the proof of Proposition \ref{mainprop2}.  \newline

\noindent{\bf Acknowledgments.} P. G. is supported in part by NSFC grant 11871082 and L. Z. by the FRG grant PS43707 and the Faculty Silverstar Award PS49334.  Parts of this work were done when P. G. visited the University of New South Wales (UNSW) in August 2018. He wishes to thank UNSW for the invitation, financial support and warm hospitality during his pleasant stay.  Finally, the authors would like to thank the anonymous referee for his/her comments and suggestions.

\bibliography{biblio}
\bibliographystyle{amsxport}

\vspace*{.5cm}

\noindent\begin{tabular}{p{8cm}p{8cm}}
School of Mathematical Sciences & School of Mathematics and Statistics \\
Beihang University & University of New South Wales \\
Beijing 100191 China & Sydney NSW 2052 Australia \\
Email: {\tt penggao@buaa.edu.cn} & Email: {\tt l.zhao@unsw.edu.au} \\
\end{tabular}

\end{document}